\documentclass[11pt]{article}

\usepackage{amssymb}
\usepackage{mathtools}
\usepackage[numbers]{natbib}
\usepackage[margin=1in]{geometry}
\usepackage[hidelinks]{hyperref}

\usepackage{titlesec}
\titleformat{\section}
{\centering\large\bfseries}{\thesection.}{.5em}{}
\titleformat{\subsection}
{\large\bfseries}{\thesubsection.}{.5em}{}
\titleformat{\subsubsection}
{\large\bfseries}{\thesubsubsection.}{.5em}{}

\usepackage{amsthm}
\newtheorem{thm}{Theorem}[section]
\newtheorem{lem}[thm]{Lemma}
\newtheorem{prop}[thm]{Proposition}
\newtheorem{cor}[thm]{Corollary}

\theoremstyle{definition}

\usepackage{stucky}


\usepackage{chngcntr}
\counterwithin*{equation}{section}

\begin{document}
\bibliographystyle{abbrv}

\begin{center}
{\large\bf THE DISTRIBUTION OF $\gcd(n,\phi(n))$}\\[1.5em]
{\scshape Joshua Stucky}
\end{center}
\vspace{.5em}

\begin{abstract}
Let $\phi(n)$ denote Euler's phi function. In this paper, we study the distribution of the numbers $(n,\phi(n))$ and their divisors. Our results generalize previous results of Erd\H{o}s \cite{ErdosGroup} and Pollack \cite{PollackOrders}.
\end{abstract}


%

\section{Introduction}

Let $\phi(n)$ denote Euler's phi function. Szele \cite{Szele} showed that the condition $(n,\phi(n)) = 1$ is equivalent to the uniqueness of a group of order $n$, and Erd\H{o}s \cite{ErdosGroup} subsequently showed that
\begin{equation}\label{eq:ErdosGroup}
\sum_{\substack{n\leq x\\ (n,\phi(n))=1}} 1 \sim \frac{x}{e^\gamma\log_3 x}.
\end{equation}
Here and throughout, $\log x = \log_1 x$ is the natural logarithm, $\log_k x = \log(\log_{k-1}(x))$ is the $k$-fold iterated natural logarithm, and $\gamma$ denotes the Euler-Mascheroni constant. Pollack \cite{PollackOrders} has given an asymptotic expansion for the sum \eqref{eq:ErdosGroup} of the form
\begin{equation}\label{eq:PollackPoincare}
\sum_{\substack{n\leq x\\ (n,\phi(n))=1}} 1 = \frac{x}{e^\gamma\log_3 x}\sumpth{1+\sum_{k=1}^K \frac{c_k}{(\log_3 x)^k}} + O_K\fracp{x}{(\log_3 x)^{K+2}}
\end{equation}
for any integer $K\geq 1$, where $c_k$ are certain arithmetic constants. Note that by M\"{o}bius inversion,
\[
\sum_{\substack{n\leq x\\ (n,\phi(n))=1}} 1 = \sum_{d\leq x} \mu(d) \cala_d(x),
\]
where $\cala_d(x)$ denotes the congruence sum
\[
\cala_d(x) = \sum_{\substack{n\leq x\\ d\mid (n,\phi(n))}} 1.
\]
In this paper, we consider the more general sums
\[
S_g(x) : = \sum_{d\leq x} g(d) \cala_d(x),
\]
where $g$ is a multiplicative function. One could consider a variety of arithmetic functions $g$ in the sum above. For clarity of presentation, we restrict our attention to bounded $g$. This is a fairly natural class of arithmetic functions that already allows us to give some interesting applications. Letting $f(n) = \sum_{d\mid n} g(d)$, we have
\[
S_g(x) = \sum_{n\leq x} f((n,\phi(n))).
\]
Our main theorem, Theorem \ref{thm:Main}, furnishes an asymptotic for the sum $S_g$. When $f$ (or, equivalently, $g$) is suitably regular over primes, we obtain an asymptotic expansion of $S_g(x)$ with a level of precision comparable to \eqref{eq:PollackPoincare}. By ``suitably regular'', we mean that we can evaluate the partial sums $\sum_{p\leq x} g(p)p^{-1}$ with a level of precision akin to the Prime Number Theorem. For all the specific $g$ we consider, $g$ will be essentially constant on primes in the sense that the following estimate holds: there are constants $x_0,C_1,C_2$ such that for any $A > 0$ and all $x > x_0$, we have
\[
\sum_{p\leq x} \frac{g(p)}{p} = C_1 \log_2 x + C_2 + O_A\fracp{1}{(\log x)^A}.
\]

Note that if $f$ is multiplicative with $f(n) \in [0,1]$ for all $n$, then by M\"{o}bius inversion, $\abs{g(n)} \leq 1$. In particular, Theorem \ref{thm:Main} allows us to obtain information about the distribution of $(n,\phi(n))$ in any set $\calb$ whose indicator function is multiplicative. Examples of such sets include
\begin{itemize}
\item sums of two squares (counted without multiplicity of representations)
\item $r$th powers (e.g. squares, cubes, etc.)
\item $r$-free numbers (e.g. squarefree or cubefree integers)
\item smooth numbers (i.e. integers free of large prime factors)
\item rough numbers (i.e. integers free of small prime factors)
\end{itemize}
If we instead take $g$ itself to be the indicator function of a set $\calb$ such that $g$ is multiplicative, we obtain information about the distribution of the divisors of $(n,\phi(n))$ that belong to $\calb$. In particular, if $\calb = \N$ (i.e. $g(n) = 1$), then we are able to calculate the average order of the divisor function $\tau$ over $(n,\phi(n))$ (in fact, this was the original motivation for the present work). In Section \ref{sec:PrimeSums}, we give a variety of applications of Theorem \ref{thm:Main}.\\

To further motivate our results, consider the following heuristic argument. Let $P(d)$ denote the largest prime factor of $d$. When $n$ is of size $x$, the condition $d\mid (n,\phi(n))$ is roughly equivalent to $d\mid n$ and $P(d) \leq \log_2 x$. This phenomenon is apparent in Erd\H{o}s' \cite{ErdosGroup} proof of \eqref{eq:ErdosGroup} and has been described more precisely by Erd\H{o}s, Luca, and Pomerance (see Theorem 8 of  \cite{ErdosLucaPomerance}). Heuristically, then, we expect 
\begin{equation}\label{eq:ConjectureSum}
S_g(x) \sim x\sum_{\substack{d\leq x\\ P(d)\leq \log_2 x}} \frac{g(d)}{d}.
\end{equation}
When $g$ is multiplicative and does not grow too quickly, we expect the sum to be well-approximated by the corresponding product over primes, and so we conjecture that
\begin{equation}\label{eq:ConjectureProduct}
S_g(x) \sim x \prod_{p\leq \log_2 x} \sumpth{\sum_{j\geq 0} \frac{g(p^j)}{p^j}}.
\end{equation}
Our main theorem establishes this conjecture in a quantitative form for bounded multiplicative functions. 

\begin{thm}\label{thm:Main}
Let $g$ be a multiplicative function such that $\abs{g(n)} \leq G$ for all $n$ and some constant $G > 0$. For any $K\geq 1$, as $x\to\infty$, we have
\[
S_g(x) = x \sumpth{\prod_{p\leq \log_2 x} \sum_{j\geq 0} \frac{g(p^j)}{p^j}} \exp\pth{\calq_g(x)} +O_{G,K}\fracp{x}{(\log_3 x)^{K}}.
\]
where
\[
\calq_g(x) = -\sum_{p\leq \log_2 x} \frac{g(p)}{p(\log x)^{1/p}} + \sum_{p > \log_2 x} \frac{g(p)}{p} \pth{1-\frac{1}{(\log x)^{1/p}}}.
\]
\end{thm}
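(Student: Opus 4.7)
The plan is to evaluate $S_g(x)=\sum_{d\leq x} g(d)\cola_d(x)$ by partitioning $d$ according to the size of its prime factors and then obtaining precise asymptotics for $\cola_d(x)$ on the main range. Set $y=\log_2 x$. Each $d$ factors uniquely as $d=ab$ with $a$ being $y$-smooth and $b$ composed of primes $>y$; by multiplicativity $g(d)=g(a)g(b)$, so
\[
S_g(x) = \sum_{a} g(a) \sum_{b} g(b)\cola_{ab}(x).
\]
I would split the inner $b$-sum into three regimes: $b=1$ (producing the main Euler product), $b$ equal to a single prime $>y$ (producing the exponential correction at large primes), and $b$ with at least two prime factors $>y$ (contributing only to the error).

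The core technical input is an asymptotic for $\cola_d(x)$ when $d$ is $y$-smooth. Writing $n=dm$ and analysing $d\mid \phi(n)$ prime by prime, one sees that for each $p^k\|d$ one needs either a sufficiently high valuation of $p$ in $n$, or a prime factor $q\equiv 1\pmod{p^k}$ of $n$. Combining inclusion--exclusion with Mertens-type estimates in arithmetic progressions should yield
\[
\cola_d(x) = \frac{x}{d}\prod_{p\mid d}\pth{1 - (\log x)^{-1/p}}\pth{1+E(d,x)},
\]
with an error $E(d,x)$ summable against $|g(d)|/d$; the factor $(\log x)^{-1/p}$ reflects the probability that $n$ has no prime factor $\equiv 1\pmod{p}$ up to an appropriate height. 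A parallel analysis treats the $b=p>y$ regime, while a trivial bound of the form $\cola_p(x)\ll x/p^2 + x\log_2 x/(p(p-1))$ (via Brun--Titchmarsh) combined with multiplicativity controls the contribution of $b$ with at least two large prime factors.

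Substituting these asymptotics back into the sum and using multiplicativity, the $y$-smooth part collapses into the truncated Euler product $\prod_{p\leq y}\pth{\sum_{j\geq 0} g(p^j)/p^j}$ multiplied by a correction of the form $\prod_{p\leq y}(1-\text{factor involving }(\log x)^{-1/p})$, while the single large-prime contribution yields a product $\prod_{p>y}(1+g(p)(1-(\log x)^{-1/p})/p)$. Taking logarithms, expanding $\log(1+u)=u+O(u^2)$, and verifying that the $O(u^2)$ tails can be absorbed into the final $O_K$ error, the combined correction becomes $\exp(\colq_g(x))$: the first sum in $\colq_g(x)$ records the small-prime correction, and the second arises from the tail over $p>y$.

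The main obstacle is the second step: obtaining an asymptotic for $\cola_d(x)$ that is simultaneously uniform in $d$ and sharp enough to yield the error $O_K(x/(\log_3 x)^K)$ after summation. The delicate points are (i) handling $d$ with many small prime factors so that the inclusion--exclusion remains combinatorially controlled; (ii) securing Mertens-type estimates in arithmetic progressions with explicit uniformity in the modulus $p$, likely via Siegel--Walfisz or Linnik-type bounds; and (iii) tracking the errors so that, after multiplication by $g(d)$ and summation over $d$, the total error meets the $K$-uniform bound for every $K$. The precise exponent $1/p$ (rather than $1/(p-1)$) in $(\log x)^{-1/p}$ is itself a delicate feature that would require careful verification through the prime-by-prime analysis.
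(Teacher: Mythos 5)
Your high-level plan (factor $d$ by prime size, find an asymptotic for $\cola_d(x)$, recombine into an Euler product times an exponential correction) is the right general idea, and you correctly identify the uniformity of the $\cola_d(x)$ asymptotic as the crux. But you do not supply the key devices that make that uniformity achievable, and as stated your proposed asymptotic for $\cola_d(x)$ would not be summable with the precision the theorem demands.

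The first gap is in the decomposition. You split $d=ab$ sharply at $y=\log_2 x$. The paper instead writes $d=sm\ell$ with $s$ supported on primes $\leq y/V$, $m$ on $(y/V,\,Uy]$, and $\ell$ on $(Uy,\infty)$, where $U,V\in[1,\sqrt y]$ are free parameters chosen only at the very end ($U=(\log_3 x)^K$, $V=\log_3 x$). The tunable window around $y$ is essential: the contribution of $\ell>1$ and the divisibility condition $s\mid\phi(smn)$ are both shown to be negligible, so only the ``middle'' part $m$ is nontrivial, and the final error can be made $\ll_K x/(\log_3 x)^K$ by adjusting $U,V$. With a hard cut at $y$ you have no analogous dial.

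The second, more serious gap concerns your proposed asymptotic
\[
\cola_d(x)=\frac{x}{d}\prod_{p\mid d}\Bigl(1-(\log x)^{-1/p}\Bigr)\bigl(1+E(d,x)\bigr),
\]
which you hope holds for all $y$-smooth $d$ with a summable error. The paper's version (Corollary \ref{cor:NoDivide}) is established only for \emph{squarefree} $d$ with $\omega(d)\leq K$ and $p(d)$ large (at least $2\sqrt{e^K\log_2 x}$), with exponent $1/(p-1)$ rather than $1/p$ (the swap to $1/p$ is performed at the end and costs $O(\log_2 x/p^2)$), and with an error $\ll_K x\bigl(\tfrac{\log p(d)}{p(d)}+\tfrac{\log_2 x}{p(d)^2}\bigr)$ that is small precisely because $p(d)$ is large. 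For $d$ with small prime factors and/or many prime factors near $y$ (which your $a$-range includes), there is no such clean statement. The paper circumvents this by three steps you do not take: (i) showing the divisibility at small primes ($\leq y/V$) may be dropped entirely at negligible cost (using Lemma \ref{lem:PhiProps} and \eqref{eq:FundamentalNoDivideUpper}); (ii) restricting $m$ to squarefree values; and (iii) truncating to $\omega(m)\leq K$, then bounding the tail $\omega(m)>K$ via the binomial theorem and Mertens, which contributes $\ll_K x\log y\bigl(\tfrac{\log U+\log V}{\log y}\bigr)^{K+1}$. Step (iii) is what ``combinatorially controls the inclusion--exclusion'' that you flag as unresolved, and it is exactly what makes the constants $K$-uniform. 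Without a bounded-$\omega$ truncation the error term in your $\cola_d$ asymptotic cannot be kept $K$-uniform when multiplied by $g(d)/d$ and summed.

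Finally, a smaller point: you appeal to Siegel--Walfisz or Linnik-type input. The paper needs only Pomerance's elementary estimate $\sum_{p\leq x,\ p\equiv 1\bmod m} 1/p = \log_2 x/\phi(m)+O(\log(2m)/\phi(m))$ (Lemma \ref{lem:Pomerance}) together with the Fundamental Lemma of sieve theory (Lemma \ref{lem:FundamentalLemmaST}); no zero-free-region or Linnik-type input is required. This matters because it keeps the error terms explicit and the dependence on $d$ transparent, which feeds directly into the uniformity argument.
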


Since $g$ is bounded, we immediately obtain the following corollary to Theorem \ref{thm:Main}.

\begin{cor}\label{cor:asymptotic}
Let the hypotheses be as in Theorem \ref{thm:Main}. Then
\[
S_g(x) = x \sumpth{\prod_{p\leq \log_2 x} \sum_{j\geq 0} \frac{g(p^j)}{p^j}} \pth{1+O_{G}\fracp{1}{\log_3 x}} + O_{G,K}\fracp{x}{(\log_3 x)^{K}}.
\]
\end{cor}

\begin{proof}
Note that $t\mapsto t(\log x)^{1/t}$ is decreasing for $t\leq \log_2 x$. As well, since $1-e^{-t} \leq t$ for all $t\geq 0$, we have
\[
1-\frac{1}{(\log x)^{1/p}} \leq \frac{\log_2 x}{p}.
\]
Thus
\[
\abs{\calq_g(x)} \leq G\frac{\pi(\log_2 x)}{e\log_2 x} + G\sum_{p > \log_2 x} \frac{\log_2 x}{p^2} \ll_G \frac{1}{\log_3 x}.
\]
\end{proof}

\noindent\textbf{Remark.} It is possible for the product in Theorem \ref{thm:Main} to be zero, as can be seen by choosing any multiplicative function $g$ such that $g(2^a) = -1$ for $a\geq 1$. In this case, Theorem \ref{thm:Main} only gives an upper bound for $S_g$, rather than an asymptotic. The author thanks Greg Martin for this observation.\\

\noindent\textbf{Outline of the Paper.} In Sections \ref{sec:Prelims} and \ref{sec:Tech}, we collect together a number of preliminary results that we will need in our proof. Section \ref{sec:Tech} contains the main technical tool used in the proof of Theorem \ref{thm:Main}, namely Corollary \ref{cor:NoDivide}. We prove Theorem \ref{thm:Main} in Sections \ref{sec:Cleaning} -- \ref{sec:Conclusion}. We begin by cleaning and decomposing the sum $S_g(x)$ in Section \ref{sec:Cleaning}, removing numerous inconvenient terms and isolating the parts of the summation variables that govern the behavior of $S_g(x)$. After these manipulations, we apply Corollary \ref{cor:NoDivide} in Section \ref{sec:FinalEval} and perform some final technical manipulations in Section \ref{sec:Conclusion} to complete the proof of Theorem \ref{thm:Main}. Finally, in Section \ref{sec:PrimeSums}, we show how to evaluate the prime sums $\calq_g(x)$ Theorem \ref{thm:Main} for a variety of arithmetic functions $g$, obtaining asymptotic expansions of $S_g(x)$ with a level of precision analogous to \eqref{eq:PollackPoincare}.\\

\noindent\textbf{Acknowledgments.} The author would like to thank Paul Pollack for useful discussions in the development of these results, and in particular for suggesting the argument used in Section \ref{sec:FinalTruncation}.\\

\noindent\textbf{Notation.} In addition to the notation already discussed, we make the following conventions. The letters $p$ and $q$ always denote primes numbers. The functions $P(n)$ and $p(n)$ are the largest and smallest prime factors of $n$, respectively, and we set $P(1) = 1$ and $p(1) = \infty$. The symbol $\sumflat$ denotes a sum over squarefree integers. Throughout, we use $C$ to denote a positive absolute constant coming from the error term in the prime number theorem. To keep our notation simple, we may modify the value of $C$ at each occurrence. Thus we may write, for example,
\[
\log_3 x \exp(-C\sqrt{\log_3 x}) \ll \exp(-C\sqrt{\log_3 x}).
\]
Rather than writing subscripts throughout, we allow all implied constants to depend on the constants $G$ and $K$ defined in Theorem \ref{thm:Main}.

\section{Preliminaries}\label{sec:Prelims}

In this section, we collect together some preliminary results we will need in our analysis. We begin with the following simple lemma which gives a useful divisibility property for $\phi$.

\begin{lem}\label{lem:PhiProps}${}$
For any positive integers $a$ and $b$, we have $\phi(a) \mid \phi(ab)$. In particular, if $d$ is a positive integer with $d\nmid \phi(ab)$, then $d\nmid \phi(a)$.
\end{lem}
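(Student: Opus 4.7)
The plan is to exploit the multiplicativity of $\phi$ together with the explicit formula $\phi(p^k) = p^{k-1}(p-1)$ for prime powers. First I would reduce the claim $\phi(a) \mid \phi(ab)$ to its prime-power local version: since $\phi$ is multiplicative, we may write
\[
\phi(a) = \prod_{p \mid a} \phi\!\left(p^{v_p(a)}\right), \qquad \phi(ab) = \prod_{p \mid ab} \phi\!\left(p^{v_p(ab)}\right),
\]
where $v_p$ denotes the $p$-adic valuation. The primes dividing $ab$ but not $a$ contribute only extra factors to $\phi(ab)$ (they do not appear at all in $\phi(a)$) and so can be ignored; it therefore suffices to show, for each prime $p \mid a$, that $\phi(p^{v_p(a)}) \mid \phi(p^{v_p(ab)})$.

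For this local step, I would set $k = v_p(a) \geq 1$ and $k + j = v_p(ab)$ with $j = v_p(b) \geq 0$, and simply observe from $\phi(p^k) = p^{k-1}(p-1)$ and $\phi(p^{k+j}) = p^{k+j-1}(p-1)$ that
\[
\frac{\phi(p^{k+j})}{\phi(p^k)} = p^j \in \mathbb{Z},
\]
so the divisibility holds. Multiplying these prime-by-prime divisibilities yields $\phi(a) \mid \phi(ab)$.

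The second statement is then just the contrapositive: if $d \mid \phi(a)$, then by the first part $d \mid \phi(ab)$, so any $d$ with $d \nmid \phi(ab)$ must fail to divide $\phi(a)$. There is no real obstacle here; the only mild subtlety is keeping track of the primes that appear in $ab$ but not in $a$, which is handled transparently by the prime-power factorization above.
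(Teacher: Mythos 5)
Your proof is correct and uses essentially the same idea as the paper: exploit multiplicativity of $\phi$ and the prime-power formula $\phi(p^k) = p^{k-1}(p-1)$. The paper packages this slightly more compactly by writing $b = b_a b'$ with $b_a$ the part of $b$ supported on the primes of $a$, giving $\phi(ab) = b_a\,\phi(a)\,\phi(b')$ in one line, whereas you carry out the same computation prime by prime; both are fine.
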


\begin{proof}
We write $b= b_a b'$, where $b_a$ is the largest divisor of $b$ supported on the primes dividing $a$. Then
\[
\phi(ab) = \phi(ab_a)\phi(b') = b_a \phi(a) \phi(b')
\]
and the claim follows.
\end{proof}

We will need the following forms of Mertens' theorems. These are straightforward consequences of the prime number theorem with classical error term $O(x\exp(-C\sqrt{\log x}))$.

\begin{lem}\label{lem:Mertens}
There is an absolute constant $c$ such that, for all $x \geq 3$, 
\begin{equation}\label{eq:MertensSum}
\sum_{p\leq x} \frac{1}{p} = \log_2 x + c + O(\exp(-C\sqrt{\log x})).
\end{equation}
We also have
\begin{equation}\label{eq:MertensProduct}
\prod_{p\leq x} \pth{1-\frac{1}{p}}^{-1} = e^\gamma \log x + O(\exp(-C\sqrt{\log x})).
\end{equation}
\end{lem}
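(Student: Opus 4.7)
The plan is to deduce both statements from the prime number theorem in the form $\theta(x) = x + O(x\exp(-C\sqrt{\log x}))$ via partial summation. The arguments are classical, so my strategy amounts to organizing two short Abel summation computations and then identifying the constants that appear.

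For \eqref{eq:MertensSum}, I would write
\[
\sum_{p\leq x} \frac{1}{p} = \int_{2^-}^x \frac{d\theta(t)}{t\log t}
\]
and integrate by parts, producing a boundary term $\theta(x)/(x\log x)$ together with an integral of $\theta(t)$ against $\left(t^{-2}(\log t)^{-1} + t^{-2}(\log t)^{-2}\right) dt$. Writing $\theta(t) = t + E(t)$ with $E(t) \ll t\exp(-C\sqrt{\log t})$, the $t$-contributions evaluate elementarily to $\log_2 x$ plus an explicit constant plus a $1/\log x$ piece. The $E(t)$ contributions define absolutely convergent improper integrals, and their truncations at $x$ differ from their values at infinity by $O(\exp(-C\sqrt{\log x}))$; after the substitution $u = \sqrt{\log t}$ this reduces to bounding a tail of the form $\int_{\sqrt{\log x}}^\infty u e^{-Cu}\,du$. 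Absorbing the subdominant $O(1/\log x)$ into this error term yields \eqref{eq:MertensSum} with an explicit constant $c$ (the Meissel--Mertens constant).

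For \eqref{eq:MertensProduct}, I would take logarithms and expand via $-\log(1-1/p) = \sum_{k\geq 1}(kp^k)^{-1}$:
\[
\log \prod_{p\leq x}\left(1-\frac{1}{p}\right)^{-1} = \sum_{p\leq x}\frac{1}{p} + \sum_{p\leq x}\sum_{k\geq 2}\frac{1}{kp^k}.
\]
The double sum on the right converges as $x\to\infty$ to a constant $C^\ast$, and its tail over $p>x$ is $\ll 1/x$, which is absorbed into $\exp(-C\sqrt{\log x})$. Combining with \eqref{eq:MertensSum} gives
\[
\log \prod_{p\leq x}\left(1-\frac{1}{p}\right)^{-1} = \log_2 x + (c + C^\ast) + O(\exp(-C\sqrt{\log x})),
\]
and exponentiating produces $\prod_{p\leq x}(1-1/p)^{-1} = e^{c+C^\ast}\log x \cdot \left(1 + O(\exp(-C\sqrt{\log x}))\right)$, with the spurious factor of $\log x$ absorbed by shrinking $C$ in accordance with the paper's notational convention.

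The only substantive check is the identification $c + C^\ast = \gamma$, which is the content of Mertens' classical identity and is equivalent to $c = \gamma - C^\ast$, the defining identity of the Meissel--Mertens constant. Since $c$ and $C^\ast$ are determined already by the weaker $o(1)$ versions of both expansions, this identification is immediate and does not require the refined error term. The main technical point throughout is thus bookkeeping with the PNT error: verifying that the various truncated error integrals all contribute at most $O(\exp(-C\sqrt{\log x}))$, which is routine given the exponential decay of $E(t)/t$.
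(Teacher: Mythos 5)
The paper gives no proof of this lemma, stating only that both identities are ``straightforward consequences of the prime number theorem with classical error term,'' so your approach via partial summation against $\theta$ is exactly the intended one. Your treatment of the second statement (taking logarithms, splitting off the absolutely convergent double sum, exponentiating, and absorbing the spurious $\log x$ factor into the exponential error by the paper's convention on $C$) is correct.

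There is, however, a genuine gap in your derivation of \eqref{eq:MertensSum}. You write that the $t$-contributions give ``$\log_2 x$ plus an explicit constant plus a $1/\log x$ piece,'' and then propose ``absorbing the subdominant $O(1/\log x)$ into this error term.'' That absorption is impossible: $1/\log x$ is \emph{larger} than $\exp(-C\sqrt{\log x})$ by an arbitrary power of $\log x$, so a residual $O(1/\log x)$ term would destroy the claimed error bound and leave you with only the weak form of Mertens' theorem. The correct observation is that there is no residual $1/\log x$ term at all: the boundary term $\theta(x)/(x\log x)$ contributes $1/\log x + O(\exp(-C\sqrt{\log x}))$, while the main-term integral $\int_2^x\bigl(\frac{1}{t\log t}+\frac{1}{t(\log t)^2}\bigr)\,dt$ contributes $\log_2 x + (\text{const}) - 1/\log x$, and these two $1/\log x$ pieces cancel exactly. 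This cancellation is precisely what allows one to upgrade the error from $O(1/\log x)$ to $O(\exp(-C\sqrt{\log x}))$ when the PNT with classical error term is available, and it should be made explicit rather than waved away as an absorption. (A minor side remark: after the substitution $u=\sqrt{\log t}$, the tail integral you face has the shape $\int_{\sqrt{\log x}}^\infty e^{-Cu}\,du/u$ or $\int_{\sqrt{\log x}}^\infty u e^{-Cu}\,du$ depending on how crudely you bound the powers of $\log t$; either is $O(\exp(-C'\sqrt{\log x}))$, so this does not affect the conclusion.)
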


Next, we need Rankin's trick in the following form.

\begin{lem}\label{lem:RankinsTrickProduct}
For any $x \geq y \geq 10$, we have
\begin{equation}\label{eq:Rankin2}
\sum_{\substack{d > x\\ P(d) \leq y}} \frac{1}{d} \ll \frac{(\log y)^3}{x^{\frac{1}{\log y}}}.
\end{equation}
\end{lem}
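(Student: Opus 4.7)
The plan is to apply Rankin's trick. For any $\sigma > 0$, every $d > x$ satisfies $(d/x)^\sigma > 1$, so
\[
\sum_{\substack{d > x \\ P(d) \leq y}} \frac{1}{d} \;\leq\; x^{-\sigma} \sum_{\substack{d \geq 1 \\ P(d) \leq y}} \frac{1}{d^{1-\sigma}} \;=\; x^{-\sigma} \prod_{p \leq y}\left(1 - \frac{1}{p^{1-\sigma}}\right)^{-1}.
\]
The claimed factor $x^{-1/\log y}$ dictates the choice $\sigma = 1/\log y$, and the remaining task is to bound the Euler product by $\ll (\log y)^3$.

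With this choice of $\sigma$, the key observation is that $p^\sigma = e^{(\log p)/(\log y)} \leq e$ for every prime $p \leq y$, so $p^{1-\sigma} \geq p/e$ and the product converges. Taking logarithms and using the Taylor expansion $-\log(1-u) = u + O(u^2)$ (with separate, bounded handling of the finitely many small primes where $p^{\sigma - 1}$ is not small), one reduces to estimating
\[
\sum_{p \leq y} \frac{1}{p^{1-\sigma}} \;=\; \sum_{p \leq y} \frac{p^\sigma}{p} \;\leq\; e \sum_{p \leq y} \frac{1}{p} \;=\; e \log_2 y + O(1),
\]
where the final equality uses Mertens' theorem (Lemma \ref{lem:Mertens}). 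The tail contribution $\sum_{p \leq y} p^{2\sigma - 2} \leq e^2 \sum_p p^{-2} = O(1)$ is harmless. Exponentiating gives that the Euler product is $\ll (\log y)^e$.

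Since $e < 3$ and $y \geq 10$, we have $(\log y)^e \leq (\log y)^3$, and multiplying by $x^{-1/\log y}$ yields the stated estimate. No step is particularly delicate; the only strategic choice is $\sigma = 1/\log y$, forced by the target exponent on $x$, and the mild loss from exponent $e$ to $3$ is absorbed into the statement of the lemma.
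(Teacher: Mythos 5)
Your proof is correct and is exactly the standard Rankin's-trick argument one would expect; the paper states this lemma without proof, so there is nothing to compare against, but your route is surely the intended one. Two small remarks: the bound $\sum_{p\le y} p^{\sigma-1} \le e\sum_{p\le y} 1/p$ together with Mertens gives $\prod_{p\le y}(1-p^{\sigma-1})^{-1} \ll (\log y)^e$, and the exponent $3$ in the statement is simply the smallest integer majorizing $e$; and the caveat about ``separate handling of small primes'' is unnecessary, since for $y\ge 10$ one has $\sigma = 1/\log y \le 1/\log 10$, hence $p^{\sigma-1} \le 2^{\sigma-1} < 0.7$ uniformly, so the Taylor expansion $-\log(1-u) = u + O(u^2)$ applies with a single absolute constant across all primes.
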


\begin{proof}
Let $\sigma = \frac{1}{\log y}$. Then
\[
\sum_{\substack{d > x\\ P(d) \leq y}} \frac{1}{d} \leq \frac{1}{x^\sigma} \sum_{P(d) \leq y} \frac{1}{d^{1-\sigma}} = \frac{1}{x^{\sigma}} \prod_{p\leq y} \pth{1-\frac{1}{p^{1-\sigma}}}^{-1}.
\]
Since $y\geq 10$, \eqref{eq:MertensProduct} yields
\[
\prod_{p\leq y} \pth{1-\frac{1}{p^{1-\sigma}}}^{-1} \leq 20 \prod_{10 < p\leq y} \pth{1-\frac{e}{p}}^{-1} \leq \prod_{10 < p\leq y} \pth{1-\frac{1}{p}}^{-3} \ll (\log y)^3.
\]
\end{proof}

%

In the course of proving our results (specifically Proposition \ref{prop:FundamentalNoDivide}), we will need to use the Fundamental Lemma of Sieve Theory. Of the versions available, we will use the one due to Koukoulopolus (see \cite{Koukoulopolus}, Theorem 18.11). The following lemma, which follows from the fundamental lemma, is stated without proof in a slightly different form in \cite{LPR}. For completeness, we include a short proof.

\begin{lem}\label{lem:FundamentalLemmaST}
Let $X$ and $Z$ be such that $2\leq Z\leq X^{1/2e^2}$, and let $\calq$ be a set of primes not exceeding $Z$. Let $(u,v]$ be an interval with $v-u = X$. For each $q\in \calq$, choose a residue class $a_q \mod{q}$. Let $N(u,v;\calq)$ be the number of integers $n \in (u,v]$ for which $n \not\equiv a_q \mod{q}$ for any $q\in\calq$. Then
\[
N(u,v;\calq) = X \prod_{q\in\calq} \pth{1-\frac{1}{q}}\pth{1+O\pth{\exp\pth{-\frac{\log X}{\log Z}}}}.
\]
\end{lem}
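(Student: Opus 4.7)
The plan is to apply Koukoulopoulos's fundamental lemma (Theorem 18.11 of \cite{Koukoulopoulus}) to the sieve problem in which the sifting sequence is $\{n : u < n \leq v\}$ (of size $X$), the sifting primes are the elements of $\colq$, and for each $q \in \colq$ the single ``bad'' residue class is $a_q \pmod q$. Then $N(u,v;\colq)$ is exactly the sifted count. For any squarefree $d$ all of whose prime divisors lie in $\colq$, the Chinese remainder theorem produces a single residue class $a_d \pmod d$ for which
\[
\#\{n : u < n \leq v,\ n \equiv a_d \pmod d\} = \frac{X}{d} + r_d, \qquad |r_d| \leq 1.
\]
This verifies the sieve axioms with density function $\rho(q) = 1$ for $q \in \colq$, i.e.\ a one-dimensional sieve.

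Next I would invoke the fundamental lemma with sifting parameter $z = Z$ and a level of distribution $D \in [2,X]$ to be chosen; the hypothesis $Z \leq X^{1/2e^2}$ guarantees $s := \log D/\log Z \geq 2e^2$ as soon as $D$ is close to $X$, which places us well inside the regime of effectiveness of the lemma. The conclusion has the shape
\[
N(u,v;\colq) = X V(Z)\{1 + O(e^{-s \log s})\} + O\left(\sum_{\substack{d \leq D \\ p \mid d \Rightarrow p \in \colq,\ p \leq Z}} |r_d|\right),
\]
with $V(Z) = \prod_{q \in \colq}(1 - 1/q)$ the product appearing in the statement. Because $|r_d| \leq 1$, the remainder sum is at most the count of $Z$-smooth squarefree integers up to $D$, which Rankin's trick (or the Canfield--Erd\H{o}s--Pomerance theorem on smooth numbers) estimates by $\ll D \exp(-s \log s(1+o(1)))$. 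Choosing $D \asymp X V(Z)$ then balances the two sources of error at the common size $O(X V(Z) e^{-s \log s})$.

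It remains to repackage $e^{-s \log s}$ as the weaker but cleaner $e^{-\log X/\log Z}$ required by the statement. Since $s \geq 2e^2$ gives $\log s > 1$, we have $e^{-s\log s} \leq e^{-s}$; and Mertens' theorem (Lemma~\ref{lem:Mertens}) yields $V(Z) \gg 1/\log Z$, so the chosen $D$ satisfies $\log D = \log X + O(\log\log Z)$ and hence $e^{-s} \ll e^{-\log X/\log Z}$. Assembling these inequalities produces the stated multiplicative error. The main obstacle is precisely this balancing act: too large a $D$ lets the remainder swamp the claimed error, while too small a $D$ shrinks $s$ enough that $e^{-s\log s}$ ceases to beat $e^{-\log X/\log Z}$. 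The rest of the proof is a careful repackaging of standard sieve-theoretic material, with the care concentrated on absorbing the $\log Z$ losses from Mertens into an absolute constant.
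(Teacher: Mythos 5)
Your proposal follows essentially the same route as the paper's own proof: both invoke Koukoulopoulos's Theorem 18.11 as the black box, both use the Chinese Remainder Theorem to translate the residue-class conditions into the setup for a one-dimensional sieve with $|A_d| = X/d + O(1)$, both verify the sieve axioms via Mertens, and both use the hypothesis $Z\le X^{1/2e^2}$ to ensure the sieve parameter is large enough for the error factor $u^{-u/2}$ to be dominated by $e^{-u}$ and hence by $\exp(-\log X/\log Z)$. The paper takes a small shortcut you don't: it picks a single shift $a_0$ with $a_0\equiv a_q\pmod q$ for all $q\in\colq$, which converts the problem into sifting the single translated sequence $n-a_0$ for divisibility by primes in $\colq$; you instead determine a residue class $a_d \pmod d$ for each squarefree $\colq$-supported $d$. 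Both are fine. One small imprecision in your writeup: you claim that choosing $D\asymp XV(Z)$ ``balances the two sources of error at the common size $O(XV(Z)e^{-s\log s})$,'' but the main-term relative error from the fundamental lemma is $u^{-u/2}=e^{-(s/2)\log s}$, which dominates the remainder's $e^{-s\log s}$; the errors are not of a common size and the main-term error is what you need to repackage. Fortunately, your subsequent reasoning only uses $e^{-(s/2)\log s}\le e^{-s}$ for $\log s\ge 2$, which is exactly what the hypothesis supplies, so the conclusion stands. (The paper, incidentally, sidesteps the $D$-balancing discussion entirely by taking the level to be $X^{1/2}$ implicitly.)
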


\begin{proof}
By the Chinese Remainder Theorem, we may choose an integer $a_0$ such that $a_q \equiv a_0 \mod{q}$ for each $q\in \calq$. We then sieve the sequence $n-a_0$ since, for $q\in\calq$, $q\mid n-a_0$ if and only if $n\equiv a_0 \equiv a_q \mod{q}$. Thus
\[
N(u,v;\calq) = \sum_{\substack{u < n\leq v\\ (n-a_0,\calq)=1}} 1 = \sum_{\substack{u-a_0 < n\leq v-a_0\\ (n,\calq)=1}} 1,
\]
where the condition $(n,\calq)=1$ means that $n$ is free of primes in $\calq$. To apply Theorem 18.11 of \cite{Koukoulopolus}, we note that for any $d$,
\[
\sum_{\substack{u-a_0 < n\leq v-a_0\\ n\equiv 0 \mod{d}}} 1 = \frac{X}{d} + O(1).
\]
Moreover, for any $2 \leq y_1 < y_2 \leq Z$, we have
\[
\prod_{\substack{p\in \calq \cap (y_1,y_2]}} \pth{1-\frac{1}{p}}^{-1} \leq \prod_{y_1 < p \leq y_2} \pth{1-\frac{1}{p}}^{-1} = \frac{\log y_2}{\log y_1}\pth{1+O\fracp{1}{\log y_1}}
\]
by Lemma \ref{lem:Mertens}. Thus Axioms 1 and 2 of Theorem 18.11 of \cite{Koukoulopolus} are satisfied. We take $u = \frac{\log X}{2\log Z}$ in the theorem and note that $u^{-u/2} \leq e^{-u}$ so long as $\log u \geq 2$. This is equivalent to $Z \leq X^{1/2e^2}$.

\end{proof}

To understand the product over primes appearing in Lemma \ref{lem:FundamentalLemmaST}, we need the following result of Pomerance \cite{Pomerance}.

\begin{lem}\label{lem:Pomerance}
Let $a,m$ be positive integers with $(a,m)=1$ and let $p_{a,m}$ denote the least prime $p\equiv a \mod{m}$. Then
\[
\sum_{\substack{p\leq x\\ p\equiv a\mod{m}}} \frac{1}{p} = \frac{\log_2 x}{\phi(m)} + \frac{1}{p_{a,m}} + O\fracp{\log(2m)}{\phi(m)}.
\]
In particular, 
\[
\sum_{\substack{p\leq x\\ p\equiv 1\mod{m}}} \frac{1}{p} = \frac{\log_2 x}{\phi(m)} + O\fracp{\log(2m)}{\phi(m)}.
\]
\end{lem}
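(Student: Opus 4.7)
The plan is to prove the lemma by combining an effective form of the prime number theorem in arithmetic progressions (Siegel--Walfisz) with the Brun--Titchmarsh inequality, treating the smallest prime $p_{a,m}$ as a separate main term. Because the error is required to be uniform in $m$, the argument splits naturally according to the size of $m$ relative to $x$.

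First I would pull out the contribution $1/p_{a,m}$ from the sum; every remaining prime $p \equiv a \pmod{m}$ satisfies $p \geq p_{a,m} + m > m$, which places us in the range where Brun--Titchmarsh is effective. The short range $p_{a,m} < p \leq 2m$ contains at most one additional prime in the residue class $a \bmod m$ (since consecutive primes in the class differ by at least $m$), contributing $O(1/m) = O(1/\phi(m))$, which is absorbed into the target error.

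For the main range $2m < p \leq x$, fix an absolute constant $A$ and treat the two regimes separately. In the regime $m \leq (\log x)^{A}$, Siegel--Walfisz supplies
\[
\pi(t;m,a) = \frac{\mathrm{Li}(t)}{\phi(m)} + O_A\!\left(t\exp(-C\sqrt{\log t})\right)
\]
uniformly for $t \in [2m,x]$; partial summation then produces $\log_2 x/\phi(m) + O_A(1/\phi(m))$, which combines with the earlier step to give the claimed identity in this regime. In the complementary regime $m > (\log x)^{A}$, Siegel--Walfisz is no longer available, but the claimed main term $\log_2 x/\phi(m)$ is itself $O(\log(2m)/\phi(m))$ and may be absorbed into the error, so it suffices to bound the residual sum. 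Brun--Titchmarsh and partial summation give
\[
\sum_{\substack{2m < p \leq x \\ p \equiv a \pmod m}} \frac{1}{p} \;\ll\; \frac{1}{\phi(m)}\int_{2m}^{x}\frac{dt}{t\log(t/m)} \;\ll\; \frac{\log_2 x + 1}{\phi(m)} \;\ll\; \frac{\log(2m)}{\phi(m)},
\]
using $\log_2 x \ll \log m$ in this regime.

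The main obstacle is gluing the two regimes together uniformly at the threshold $m \sim (\log x)^{A}$ and verifying that all error terms collapse cleanly into the stated $O(\log(2m)/\phi(m))$. A conceptual subtlety that explains why the statement must retain $1/p_{a,m}$ as its own term is that by Linnik's theorem $p_{a,m}$ can be as large as a fixed power of $m$, so $1/p_{a,m}$ cannot in general be absorbed into $O(\log(2m)/\phi(m))$ and must be displayed explicitly.
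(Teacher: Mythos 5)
The paper does not prove this lemma at all: it is quoted verbatim from Pomerance's 1977 paper (the reference \texttt{\cite{Pomerance}} immediately preceding the statement), so there is no in-paper proof to compare against. Your attempt therefore has to be judged on its own.

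The overall plan (isolate $1/p_{a,m}$, show the short range $(p_{a,m},2m]$ contributes at most one extra prime, then split on the size of $m$, with Brun--Titchmarsh handling large $m$ and the prime number theorem in arithmetic progressions handling small $m$) is the right structure and is essentially what Pomerance does. Your handling of the large-$m$ regime, and the observation that at most one prime of the progression lies in $(p_{a,m},2m]$, are both correct.

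There is, however, a genuine gap in the small-$m$ regime. You claim that when $m \le (\log x)^A$, Siegel--Walfisz gives
\[
\pi(t;m,a) = \frac{\mathrm{Li}(t)}{\phi(m)} + O_A\bigl(t\exp(-C\sqrt{\log t})\bigr)
\]
\emph{uniformly for $t\in[2m,x]$}. This is false: Siegel--Walfisz with parameter $A$ is valid only when $m \le (\log t)^A$, i.e.\ for $t \ge \exp(m^{1/A})$. Near the lower endpoint $t=2m$ one would need $m \le (\log 2m)^A$, which fails as soon as $m$ exceeds a bounded threshold. So within the regime $m \le (\log x)^A$ you still have a range $2m < t \le \exp(m^{1/A})$ where the asymptotic you are invoking is simply not available. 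The fix is to introduce a third sub-range: estimate the contribution of $2m < p \le \exp(m^{1/A})$ by Brun--Titchmarsh alone,
\[
\sum_{\substack{2m < p \le \exp(m^{1/A})\\ p\equiv a \bmod m}} \frac{1}{p} \;\ll\; \frac{1}{\phi(m)} \int_{2m}^{\exp(m^{1/A})} \frac{dt}{t\log(t/m)} \;\ll\; \frac{\log m}{\phi(m)},
\]
and only then apply Siegel--Walfisz (with partial summation) on $(\exp(m^{1/A}), x]$, where $m \le (\log t)^A$ is automatic. The loss of $\log\log(\exp(m^{1/A})) \ll \log m$ in the main term from truncating at $\exp(m^{1/A})$ is again absorbed into $O(\log(2m)/\phi(m))$. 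With that extra split the argument closes; without it, the partial-summation step is applying a theorem outside its range of validity.

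A smaller stylistic point: the paper's convention is that $C$ denotes an effective constant coming from the classical PNT error term, whereas Siegel--Walfisz introduces an ineffective constant. Pomerance's lemma as used here only needs an absolute implied constant, effective or not, so this does not affect correctness, but you should be aware the constants you produce this way are ineffective.
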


\section{Main Technical Propositions}\label{sec:Tech}

In this section, we prove several results that we will use to evaluate sums involving the divisibility conditions $d\mid\phi(n)$ and $d\nmid\phi(n)$. The first is a simple consequence of the Brun-Titchmarsh inequality.

\begin{prop}\label{prop:FundamentalDivide}
For any $x > 1$ and $p < x$, we have
\[
\sum_{\substack{n\leq x\\ p\mid\phi(n)}} 1 \ll \frac{x\log_2 x}{p}.
\]
\end{prop}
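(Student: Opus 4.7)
The plan is a two-step reduction using only the characterization of when a prime $p$ divides $\phi(n)$. Since $\phi(n)=\prod_{q^{a}\|n}q^{a-1}(q-1)$, we have $p\mid\phi(n)$ if and only if either $p^{2}\mid n$ or some prime $q\mid n$ satisfies $q\equiv 1\pmod{p}$. A union bound over these two possibilities gives
\[
\sum_{\substack{n\leq x\\ p\mid\phi(n)}}1 \;\leq\; \frac{x}{p^{2}} \;+\; \sum_{\substack{q\leq x\\ q\equiv 1\,(p)}}\left\lfloor\frac{x}{q}\right\rfloor \;\leq\; \frac{x}{p^{2}} \;+\; x\sum_{\substack{q\leq x\\ q\equiv 1\,(p)}}\frac{1}{q},
\]
so the proposition reduces to showing that the inner sum over primes $q$ is $O(\log_{2}x/p)$ uniformly in $p<x$.

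For this inner sum I would apply Abel summation together with the Brun--Titchmarsh inequality $\pi(t;p,1)\ll t/(p\log(t/p))$, which holds for $t>p$. Writing the sum as $\pi(x;p,1)/x+\int\pi(t;p,1)t^{-2}\,dt$, the boundary term is $O(1/p)$ (trivially if $x\leq 2p$, and by Brun--Titchmarsh otherwise). For the integral, the contribution from $t\in(p,2p]$ is $O(1/p)$ because the short interval $(p,2p]$ contains at most one prime congruent to $1\pmod{p}$. On the remaining range $t>2p$, the substitution $u=\log(t/p)$ converts the Brun--Titchmarsh contribution into a constant multiple of $p^{-1}\int du/u$, which integrates to $(\log\log(x/p)+O(1))/p\ll \log_{2}x/p$. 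Combining with the $x/p^{2}\leq x\log_{2}x/p$ contribution completes the proof.

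The main technical point to watch is that Lemma~\ref{lem:Pomerance} alone is not quite sharp enough once $p>\log x$, since its error term $O(\log(2p)/\phi(p))$ can exceed the target $\log_{2}x/p$ in that regime. This is what forces a direct Abel--Brun--Titchmarsh calculation rather than a clean citation of that lemma; the estimate must be uniform in $p$ all the way up to (just below) $x$.
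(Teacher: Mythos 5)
Your proposal is correct and follows the paper's approach exactly: the same decomposition into $p^2\mid n$ versus $q\mid n$ with $q\equiv 1\pmod p$, followed by Brun--Titchmarsh and partial summation to bound $\sum_{q\leq x,\,q\equiv1\,(p)}1/q\ll\log_2 x/p$. You simply spell out the partial-summation details that the paper leaves implicit; the side remark about why Lemma~\ref{lem:Pomerance} is insufficient uniformly in $p$ is accurate but not needed.
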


\begin{proof}
If $p\mid \phi(n)$, then either $p^2\mid n$ or $n$ is divisible by a prime $q\equiv 1 \mod{p}$. The Brun-Titchmarsh inequality and partial summation then gives
\[
\sum_{\substack{n\leq x\\ p\mid\phi(n)}} 1 \leq \frac{x}{p^2} + \sum_{\substack{q\leq x\\ q\equiv 1 \mod{p}}} \frac{x}{q} \ll \frac{x\log_2 x}{p}.
\]
\end{proof}

The following proposition allows us to evaluate asymptotically sums involving divisibility conditions on $\phi(n)$.

\begin{prop}\label{prop:FundamentalNoDivide}
Let $x$ be sufficiently large. Denote by $\alpha$ and $\beta$ the functions
\begin{equation}\label{eq:AlphaBetaDefs}
\alpha(d) = \prod_{p\mid d}\pth{1-\frac{1}{p-1}}, \qquad \beta(d) = \sum_{p\mid d} \frac{\log p}{p}.
\end{equation}
Let $d$ be a positive squarefree integer such that the following hold:
\begin{enumerate}[label={\normalfont(\roman*)}]
\item $d$ is odd and $1<d \leq \exp\pth{\frac{1}{2}\sqrt{\log x}}$;
\item $\frac{\beta(d)d}{\phi(d)}$ is bounded.
\end{enumerate}
Then
\begin{equation}\label{eq:FundamentalNoDivideAsymp}
\sum_{\substack{n\leq x\\ (\phi(n),d)=1}} 1 = \frac{x}{(\log x)^{1-\alpha(d)}}\pth{1+O\fracp{\beta(d)d}{\phi(d)}}.
\end{equation}
If $\omega(d) \leq K$ for some $K \geq 1$, then
\begin{equation}\label{eq:BoundedBound}
\frac{\beta(d)d}{\phi(d)} \ll_K \frac{\log p(d)}{p(d)}.
\end{equation}
\end{prop}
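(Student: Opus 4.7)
My strategy is to convert the condition $(\phi(n),d)=1$ into a sieve problem, apply Lemma~\ref{lem:FundamentalLemmaST} at a sieve level matched to the hypothesis on $d$, and then use Lemma~\ref{lem:Pomerance} together with inclusion-exclusion over $e\mid d$ to evaluate the resulting product over primes. Since $d$ is squarefree and $\phi(q^a)=q^{a-1}(q-1)$, the condition $(\phi(n),d)=1$ is equivalent to (a) $p^2\nmid n$ for every prime $p\mid d$, together with (b) no prime factor $q$ of $n$ satisfies $(q-1,d)>1$. Letting $\mathcal{P}_d=\{q\text{ prime}:(q-1,d)>1\}$, the task reduces to counting integers $n\leq x$ none of whose prime factors lie in $\mathcal{P}_d\cup\{p:p\mid d\}$; condition (a) contributes only a minor correction of size $O\bigl(x\sum_{p\mid d}p^{-2}\bigr)$, readily absorbed.

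I would take $Z=\exp(\sqrt{\log x})$, so that $d\leq Z^{1/2}$ by hypothesis (i) and $Z\leq x^{1/2e^2}$ for $x$ sufficiently large. Applying Lemma~\ref{lem:FundamentalLemmaST} with $\mathcal{Q}=(\mathcal{P}_d\cup\{p:p\mid d\})\cap[1,Z]$ and residue class $0\pmod{q}$ for each $q\in\mathcal{Q}$ yields a main term $x\prod_{q\in\mathcal{Q}}(1-1/q)$ plus an error of size $\exp(-\sqrt{\log x})$. To evaluate the product I take logarithms; by inclusion-exclusion over $e\mid d$ and Lemma~\ref{lem:Pomerance},
\[
\sum_{q\in\mathcal{P}_d\cap[1,Z]}\frac{1}{q} \;=\; -\sum_{\substack{e\mid d\\e>1}}\mu(e)\sum_{\substack{q\leq Z\\q\equiv 1\pmod{e}}}\frac{1}{q} \;=\; (1-\alpha(d))\log_2 Z + O\!\left(\frac{d\beta(d)}{\phi(d)}\right),
\]
where the main term uses the squarefree identity $\sum_{e\mid d}\mu(e)/\phi(e)=\alpha(d)$ and the error uses its companion $\sum_{e\mid d,\,e>1}\log(e)/\phi(e)=d\beta(d)/\phi(d)$, verified by a direct multiplicative calculation. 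Exponentiating produces the exponent $1-\alpha(d)$ on a logarithm.

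The main obstacle is replacing $\log Z=\sqrt{\log x}$ by the full $\log x$ in this power: since the missing sum $\sum_{q>Z,\,q\in\mathcal{P}_d}1/q\asymp(1-\alpha(d))\log_2 x$ is not small, the residual contribution of integers having a prime factor $q>Z$ in $\mathcal{P}_d$ must be built into the main term rather than discarded. I would handle this by iterating Lemma~\ref{lem:FundamentalLemmaST} on a dyadic sequence of sieve bounds $Z,Z^2,Z^4,\ldots$ reaching up to $x$, or equivalently by a Buchstab-type identity that peels off large prime factors one at a time while applying Lemma~\ref{lem:Pomerance} at each scale. The telescoping upgrades $(\log Z)^{-(1-\alpha(d))}$ to $(\log x)^{-(1-\alpha(d))}$ and folds all $d$-dependent bounded constants (together with the $a$-contribution from decomposing $n=am$ along primes $p\mid d$ that appear in $n$) into a single multiplicative correction $1+O(\beta(d)d/\phi(d))$, yielding~\eqref{eq:FundamentalNoDivideAsymp}.

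Finally, for~\eqref{eq:BoundedBound}: since $d$ is odd we have $p(d)\geq 3$, and the function $t\mapsto\log(t)/t$ is decreasing for $t\geq 3$. Hence $\beta(d)=\sum_{p\mid d}\log(p)/p\leq \omega(d)\log(p(d))/p(d)\leq K\log(p(d))/p(d)$. Moreover $d/\phi(d)=\prod_{p\mid d}p/(p-1)\leq(p(d)/(p(d)-1))^K\ll_K 1$, so multiplying gives $\beta(d)d/\phi(d)\ll_K\log(p(d))/p(d)$.
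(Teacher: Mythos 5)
Your sieve setup matches the paper's: the condition $(\phi(n),d)=1$ translates into freedom from the primes $q$ with $(q-1,d)>1$ (plus a negligible $p^2\nmid n$ correction), and the mean value of $\sum_{q\in\mathcal{Q},\,q\leq z}1/q$ is computed via inclusion--exclusion over $l\mid d$ and Lemma~\ref{lem:Pomerance}, producing $(1-\alpha(d))\log_2 z + O(\beta(d)d/\phi(d))$. (Your claimed exact identity $\sum_{e\mid d,\,e>1}\log(e)/\phi(e)=d\beta(d)/\phi(d)$ is not literally true, but the two quantities are comparable, so this is a cosmetic issue.) Your argument for \eqref{eq:BoundedBound} is correct: monotonicity of $t\mapsto(\log t)/t$ for $t\geq 3$ gives $\beta(d)\leq K\log p(d)/p(d)$, and $d/\phi(d)\ll_K 1$.

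The genuine gap is in upgrading $(\log Z)^{-(1-\alpha(d))}$ to $(\log x)^{-(1-\alpha(d))}$. You correctly diagnose the problem --- with $Z=\exp(\sqrt{\log x})$ you only get half the exponent on $\log x$ --- but the proposed fix (``iterate Lemma~\ref{lem:FundamentalLemmaST} on dyadic scales $Z,Z^2,\ldots$'' or ``a Buchstab-type identity'') does not come close to a proof. The fundamental lemma requires the sieve level to stay below $x^{1/2e^2}$, so no amount of iterating it can sieve by the primes $q\in\mathcal{Q}$ with $q>x^{1/2e^2}$; yet those primes carry a share $\asymp(1-\alpha(d))$ of $\sum_{q\in\mathcal{Q}}1/q$ that cannot be discarded. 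The real content of the proposition is precisely the lower-bound estimate showing that $n\leq x$ free of small $\mathcal{Q}$-primes but divisible by a large one ($q>x^{1/J}$, $q\in\mathcal{Q}$) are rare. The paper handles this by writing $n=AB$ with $A$ the part of $n$ supported on large $\mathcal{Q}$-primes, observing that such $A$ satisfies $A\equiv 1\pmod l$ for some $l\mid d$, $l>1$, and sieving the arithmetic progression $A=1+la$; summing over $B$ via Mertens then bounds this contribution by $J^{\alpha(d)}\theta(d)\cdot x/(\log x)^{1-\alpha(d)}$ with $\theta(d)=\sum_{l\mid d,\,l>1}1/\phi(l)$. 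Your proposal offers no substitute for this argument.

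Two further, smaller misalignments. First, $Z=\exp(\sqrt{\log x})$ is not what you want at all: the paper's choice is $z=x^{1/J}$ for a \emph{bounded} $J$ (eventually $J=\log p(d)$), so that $\log_2 z=\log_2 x-\log J$ and the discrepancy $J^{1-\alpha(d)}=1+O(\beta(d))$ is negligible precisely because $1-\alpha(d)$ is small when $p(d)$ is large. Second, the hypothesis $d\leq\exp(\frac12\sqrt{\log x})$ is not needed so that $d\leq Z^{1/2}$ for the sieve; it is used to guarantee $\log x/\log l\geq 2J$ for every $l\mid d$ in the $n=AB$ count, a step your outline does not reach.
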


\begin{proof}
Let
\[
\calq = \bigcup_{p\mid d} \set{q: q\equiv 1 \mod{p}}, \qquad Q(z) = \prod_{\substack{q\in \calq\\ q\leq z}} q
\]
and note that by the Chinese Remainder Theorem and Lemma \ref{lem:Pomerance},
\[
\begin{aligned}
\sum_{\substack{q\leq x\\ q\in \calq}} \frac{1}{q} &= -\sum_{\substack{l\mid d\\ l > 1}} \mu(l) \sum_{\substack{q\leq x\\ q\equiv 1 \mod{l}}} \frac{1}{q} \\
&= -\sum_{\substack{l\mid d\\ l > 1}} \mu(l) \sumpth{\frac{\log_2 x}{\phi(l)} + O\fracp{\log l}{\phi(l)}} \\
&= (1-\alpha(d))\log_2 x + O\sumpth{\sum_{l\mid d}\frac{\log l}{\phi(l)}}.
\end{aligned}
\]
To estimate the error term, let
\[
g_d(s) = \sum_{l\mid d} \frac{1}{\phi(l)^s}.
\]
By the well-known inequalities
\[
\frac{l}{\log_2 l} \ll \phi(l) \leq l,
\]
we have $\log l \asymp \log\phi(l)$, and so the error term above is
\[
\asymp \sum_{l\mid d} \frac{\log\phi(l)}{\phi(l)} = -g_d'(1).
\]
But for $d$ squarefree,
\[
g_d(s) = \prod_{p \mid d} \pth{1+\frac{1}{(p-1)^s}} = \exp\sumpth{\sum_{p\mid d} \log\pth{1+\frac{1}{(p-1)^s}}},
\]
and so
\[
-g_d'(1) = \sumpth{\sum_{p \mid d} \frac{\log(p-1)}{p}} \prod_{p\mid d} \pth{1+\frac{1}{p-1}} = \frac{d}{\phi(d)}\sum_{p \mid d} \frac{\log(p-1)}{p} <  \frac{\beta(d)d}{\phi(d)}.
\]
Thus
\begin{equation}\label{eq:PrimeReciprocalsTheta}
\sum_{\substack{q\leq x\\ q\in \calq}} \frac{1}{q} = (1-\alpha(d))\log_2 x + O\fracp{\beta(d)d}{\phi(d)}.
\end{equation}

Let $J \geq 2e^2$ be a parameter to be chosen. For the upper bound in \eqref{eq:FundamentalNoDivideAsymp}, we note that the condition $(\phi(n),d)=1$ implies that $n$ is free of primes $q \equiv 1 \mod{p}$ for each prime $p\mid d$. In particular, $n$ is free of all such primes $q\leq x^{1/J}$. By Lemma \ref{lem:FundamentalLemmaST} with $z = x^{1/J}$, the number of integers up to $x$ free of primes dividing $Q(z)$ is
\[
x \prod_{\substack{q\leq x^{1/J}\\ q\in \calq}} \pth{1-\frac{1}{q}} \pth{1+O\pth{e^{-J}}}.
\]
To evaluate the product over primes, we first write
\[
\prod_{\substack{q\leq x^{1/J}\\ q\in \calq}} \pth{1-\frac{1}{q}} = \exp\sumpth{\sum_{\substack{q\leq x^{1/J}\\ q\in \calq}} \log\pth{1-\frac{1}{q}}} = \exp\sumpth{-\sum_{\substack{q\leq x^{1/J}\\ q\in \calq}} \frac{1}{q} + O\sumpth{\sum_{\substack{q\leq x^{1/J}\\ q\in \calq}}\frac{1}{q^2}}}.
\]
The error term is
\[
\sum_{\substack{q\leq x^{1/J}\\ q\in \calq}}\frac{1}{q^2} \leq \sum_{p\mid d} \sum_{q\equiv 1\mod{p}}\frac{1}{q^2} \ll \sum_{p\mid d} \frac{1}{p^2} \ll \frac{1}{p(d)},
\]
and so
\[
\prod_{\substack{q\leq x^{1/J}\\ q\in \calq}} \pth{1-\frac{1}{q}} =\exp\sumpth{-\sum_{\substack{q\leq x^{1/J}\\ q\in \calq}} \frac{1}{q}}\pth{1+O\fracp{1}{p(d)}}.
\]
By \eqref{eq:PrimeReciprocalsTheta}, the remaining sum over $q$ is
\[ 
\sum_{\substack{q\leq x^{1/J}\\ q\in \calq}} \frac{1}{q} = (1-\alpha(d))(\log_2 x-\log J) + O\fracp{\beta(d)d}{\phi(d)}.
\]
and thus
\begin{equation}\label{eq:PrimeProductExponential}
\prod_{\substack{q\leq x^{1/J}\\ q\in \calq}} \pth{1-\frac{1}{q}}
= \frac{J^{1-\alpha(d)}}{(\log x)^{1-\alpha(d)}}\pth{1+O\fracp{\beta(d)d}{\phi(d)}},
\end{equation}
since 
\[
\frac{\beta(d)d}{\phi(d)} \geq \beta(d) \geq \frac{1}{p(d)}.
\]
for $p(d) \geq 3$. Therefore
\[
\sum_{\substack{n\leq x\\ (\phi(n),d)=1}} 1 \leq \frac{J^{1-\alpha(d)}x}{(\log x)^{1-\alpha(d)}}\pth{1+O\pth{e^{-J} + \frac{\beta(d)d}{\phi(d)}}}.
\]

For the lower bound, note that if $(n,d) = 1$ and $n$ is free of primes in $\calq$, then $(\phi(n),d)=1$. Thus
\begin{equation}\label{eq:Diff}
\begin{aligned}
\sum_{\substack{n\leq x\\ (\phi(n),d)=1}} 1 \geq \sum_{\substack{n\leq x\\ (n,dQ(x))=1}} 1 &=  \sum_{\substack{n\leq x\\ (n,dQ(x^{1/J}))=1}} 1 - \sum_{\substack{n\leq x\\ (n,Q(x)) > 1\\ (n,dQ(x^{1/J}))=1}} 1 \\
&\geq  \sum_{\substack{n\leq x\\ (n,dQ(x^{1/J}))=1}} 1 - \sum_{\substack{n\leq x\\ (n,Q(x)) > 1\\ (n,Q(x^{1/J}))=1}} 1.
\end{aligned}
\end{equation}
Reasoning as before, the first sum is
\[
\frac{\phi(d)}{d}\frac{J^{1-\alpha(d)}x}{(\log x)^{1-\alpha(d)}}\pth{1+O\pth{e^{-J} + \frac{\beta(d)d}{\phi(d)}}}.
\]
The hypotheses of the proposition imply that $\beta(d)$ is bounded, and so
\[
\frac{\phi(d)}{d} = \exp\sumpth{\sum_{p\mid d} \log\pth{1-\frac{1}{p}}} = \exp(O(\beta(d))) =  1 + O(\beta(d)),
\]
and so
\[
\sum_{\substack{n\leq x\\ (n,dQ(x^{1/J}))=1}} 1 = \frac{J^{1-\alpha(d)}x}{(\log x)^{1-\alpha(d)}}\pth{1+O\pth{e^{-J} + \frac{\beta(d)d}{\phi(d)}}}.
\]
To bound the second sum in \eqref{eq:Diff}, note that the conditions $(n,Q(x)) > 1$, $(n,Q(x^{1/J}))=1$ induce a factorization $n=AB$, where $A$ is the largest divisor of $n$ supported on primes dividing $Q(x)/Q(x^{1/J})$. Every such $A$ satisfies $A \equiv 1 \mod{l}$ for some $l\mid d$, $l > 1$. We fix $B$ and $l$ and count the number of corresponding $A$. Note that $1 < A \leq x/B$ and that every prime dividing $A$ exceeds $x^{1/J}$. As well, $A = 1 +la$ for some $a < x/lB$.  If we assume for the moment that $\frac{\log x}{\log l} \geq 2J$, then $x/lB > x^{1/2J}$ since $B < x^{1-1/J}$. For any prime $q\leq x^{1/2J}$ with $q\nmid l$, we have $a \equiv \bar{l}(A-1) \mod{q}$. Since every prime that divides $A$ is at least $x^{1/J}$, we have $A \not\equiv 0 \mod{q}$, and so $a\not\equiv -\bar{l} \mod{q}$. Applying Lemma \ref{lem:FundamentalLemmaST} to $a$ with $a_q \equiv -\bar{l} \mod{q}$, the number of $A$ corresponding to a given $B$ and $l$ is 
\[
\ll \frac{x}{lB} \prod_{\substack{q\leq x^{1/2J}\\ q\nmid l}} \pth{1-\frac{1}{q}} \leq \frac{x}{lB} \prod_{q\leq x^{1/2J}} \pth{1-\frac{1}{q}} \prod_{q\mid l} \pth{1-\frac{1}{q}}^{-1} \ll \frac{xJ}{\phi(l)B\log x}.
\]
Summing over $l$, the number of $A$ corresponding to a given $B$ is
\[
\ll \frac{xJ}{B\log x} \sum_{\substack{l\mid d\\ l > 1}} \frac{1}{\phi(l)}.
\]
Mertens' theorem then gives
\[
\begin{aligned}
\sum \frac{1}{B} \leq \prod_{\substack{q\leq x\\ q\not\in\calq}} \pth{1-\frac{1}{q}}^{-1} &= \prod_{\substack{q\leq x}} \pth{1-\frac{1}{q}}^{-1} \prod_{\substack{q\leq x\\ q\in\calq}} \pth{1-\frac{1}{q}} \\
&\ll \log x\exp\sumpth{-\sum_{\substack{q\leq x\\ q\in\calq}} \frac{1}{q}} \\
&\ll (\log x)^{\alpha(d)}
\end{aligned}
\]
Therefore
\[
\sum_{\substack{n\leq x\\ (n,Q(x)) > 1\\ (n,Q(x^{1/J}))=1}} 1 \ll \frac{xJ}{(\log x)^{1-\alpha(d)}} \sum_{\substack{l\mid d\\ l > 1}} \frac{1}{\phi(l)}.
\]
Letting $\theta(d)$ denote the sum over $l$, we have shown
\[
\sum_{\substack{n\leq x\\ (\phi(n),d)=1}} 1 = \frac{J^{1-\alpha(d)}x}{(\log x)^{1-\alpha(d)}}\pth{1+O\pth{e^{-J} + \frac{\beta(d)d}{\phi(d)} + J^{\alpha(d)} \theta(d)}}.
\]
Since $p(d) \geq 3$, we have
\[
\alpha(d) = \prod_{p\mid d}\pth{1-\frac{1}{p-1}} \geq \exp\sumpth{-\sum_{p\mid d} \frac{2}{p-1}} \geq \exp\pth{-\frac{4\beta(d)}{\log p(d)}}\geq 1-\frac{4\beta(d)}{\log p(d)},
\]
and also
\[
\begin{aligned}
1+\theta(d) = \prod_{p\mid d} \pth{1+\frac{1}{p-1}}  \leq \exp\sumpth{\sum_{p\mid d} \frac{1}{p-1}} &\leq \exp\fracp{2\beta(d)}{\log p(d)} \\
&= 1 + O\fracp{\beta(d)}{\log p(d)}.
\end{aligned}
\]
That is, both $1-\alpha(d)$ and $\theta(d)$ are $O\fracp{\beta(d)}{\log p(d)}$. If $p(d) \geq e^{2e^2}$, we choose $J = \log p(d)$ so that the hypotheses of the proposition ensure that $\frac{\log x}{\log l} \geq 2J \geq 4e^2$ for every $l\mid d$, $l > 1$ and $x$ sufficiently large. Since $\alpha(d) \leq 1$, we then have $J^{\alpha(d)}\theta(d) \ll \beta(d)$ and 
\[
J^{1-\alpha(d)} = \exp((1-\alpha(d)) \log_2 p(d)) = 1 + O(\beta(d)).
\]
Therefore
\[
\sum_{\substack{n\leq x\\ (\phi(n),d)=1}} 1 = \frac{x}{(\log x)^{1-\alpha(d)}}\pth{1+O\fracp{\beta(d)d}{\phi(d)}},
\]
if $p(d) \geq e^{2e^2}$. If $p(d) < e^{2e^2}$, the $\beta(d) \gg 1$, and so \eqref{eq:FundamentalNoDivideAsymp} can only be an upper bound. In this case, the hypotheses of the proposition and \eqref{eq:PrimeReciprocalsTheta} imply that there is an absolute constant $C > 0$ such that
\[
\sum_{\substack{q\leq x\\ q\in \calq}} \frac{1}{q} \geq (1-\alpha(d))\log_2 x -C.
\]
The desired upper bound then follows from Brun's sieve, specifically in the form of Theorem 2.3 of \cite{HalberstamRichert}.

To prove \eqref{eq:BoundedBound}, we simply note that if $\omega(d) \leq K$, then
\[
\sum_{p\mid d} \frac{\log p}{p} \ll K \frac{\log p(d)}{p(d)}.
\]
and
\[
\frac{d}{\phi(d)} = \prod_{p\mid d} \pth{1-\frac{1}{p}}^{-1} \leq \prod_{r=1}^K \pth{1-\frac{1}{p_r}}^{-1},
\]
where $p_r$ denotes the $r$th prime. We have $p_r \ll r\log r$, and so \eqref{eq:MertensProduct} gives
\[
\prod_{r=1}^K \pth{1-\frac{1}{p_r}}^{-1} \ll \log p_K \ll \log K.
\]

\end{proof}

The asymptotic formula \eqref{eq:FundamentalNoDivideAsymp} is not quite suitable for our purposes. To put it into a form more amenable for calculations, we need the following estimate for the function $\alpha(d)$ appearing in Proposition \ref{prop:FundamentalNoDivide}.

\begin{lem}\label{lem:alphaProp}
Let
\[
\alpha_1(d) = \sum_{p\mid d} \frac{1}{p-1}.
\]
If $K \geq 1$ and $\omega(d) \leq K$, then
\[
\abs{1-\alpha(d)-\alpha_1(d)} \leq \frac{4e^K}{p(d)^2}.
\]
\end{lem}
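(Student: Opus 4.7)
The plan is to expand $\alpha(d)$ as an alternating sum of elementary symmetric polynomials in the variables $a_p := 1/(p-1)$ over $p\mid d$, and then to estimate the tail of that sum. Writing $e_j$ for the $j$-th such polynomial and $k=\omega(d)$, we have $\alpha(d) = \sum_{j=0}^{k}(-1)^{j}e_j$ and $\alpha_1(d) = e_1$, so
\[
|1-\alpha(d)-\alpha_1(d)| = \left|\sum_{j\geq 2}(-1)^{j+1}e_j\right| \leq \sum_{j\geq 2}e_j.
\]

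The key ingredient is the Newton-type identity
\[
e_1\, e_j \;=\; (j+1)\,e_{j+1} \;+\; \sum_{p\mid d}a_p^{2}\, e_{j-1}\bigl(\{a_q\}_{q\mid d,\, q\neq p}\bigr),
\]
obtained by expanding $e_1\, e_j$ and grouping terms according to whether the ``outer'' prime lies in the index set of $e_j$. Since the remainder sum is nonnegative and $\alpha_1(d) = e_1 \leq \omega(d) \leq K$, one deduces $e_{j+1} \leq K\,e_j/(j+1)$, and iterating gives $e_j \leq 2K^{j-2}e_2/j!$ for every $j\geq 2$. Summing,
\[
\sum_{j\geq 2}e_j \;\leq\; 2e_2\sum_{j\geq 2}\frac{K^{j-2}}{j!} \;=\; \frac{2e_2(e^K-1-K)}{K^2} \;\leq\; \frac{2e_2\, e^K}{K^2}.
\]

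To conclude, one estimates $e_2$ crudely: every $a_p$ is at most $1/(p(d)-1)\leq 2/p(d)$ (using $p-1\geq p/2$ for $p\geq 2$), giving $e_2 \leq \binom{k}{2}(2/p(d))^2 \leq 2K^2/p(d)^2$. Plugging this into the previous display yields the claimed bound $4e^K/p(d)^2$. The conceptual point of the argument is that the $K^{-2}$ factor emerging from the exponential series in the Newton-type iteration exactly cancels the $K^2$ appearing in the crude bound on $e_2$, producing the clean exponential dependence on $K$ rather than a worse $K^{2}e^{K}$ factor that a more naive expansion would give. There is no serious obstacle beyond locating this cancellation; the remaining work is routine bookkeeping.
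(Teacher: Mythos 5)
Your proof is correct, but the final remark about what a ``more naive expansion'' would cost is off the mark, and in fact the paper's own argument is exactly the more naive expansion you disparage --- and it lands the same constant. The paper expands $\alpha(d) = \prod_{p\mid d}(1-a_p)$ with $a_p=1/(p-1)$, observes that the $j$th elementary symmetric polynomial satisfies the standard bound $e_j \leq e_1^j/j!$, plugs in $e_1 \leq K/(p(d)-1)$, pulls out two powers of $1/(p(d)-1)$ (picking up the factor $4$ from $(p(d)-1)^2 \geq p(d)^2/4$), and sums the exponential series: this gives $\sum_{j\geq 2}e_j \leq \frac{4}{p(d)^2}\sum_{j\geq 2}\frac{K^j}{j!} \leq \frac{4e^K}{p(d)^2}$ directly, with no ``$K^2 e^K$'' loss. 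Your route instead derives the recursion $e_{j+1}\leq \frac{e_1}{j+1}e_j$ from the (correct, though not really Newton's) identity $e_1 e_j = (j+1)e_{j+1} + \sum_p a_p^2\, e_{j-1}(\{a_q\}_{q\neq p})$, iterates it down to $e_2$, and then bounds $e_2$ separately to extract $p(d)^{-2}$. Both approaches are sound, and yours in fact proves a marginally sharper intermediate inequality $e_j \leq 2e_2 e_1^{j-2}/j!$ (as opposed to $e_1^j/j!$), which could matter if $e_2 \ll e_1^2$; but for this lemma the improvement is invisible. So the conceptual cancellation you highlight is real inside your own argument, but it is not a difficulty that the direct approach has to avoid --- the paper's shorter proof obtains the identical bound without it.
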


\begin{proof}
Expanding the product in the definition of $\alpha(d)$ and grouping the terms depending on the value of $\omega(g)$, we have
\[
1-\alpha(d) = - \sum_{\substack{g\mid d\\ g>1}} \frac{\mu(g)}{\phi(g)} = \alpha_1(d) - \sum_{k=2}^{\omega(d)} (-1)^k \sum_{\substack{g\mid d\\ \omega(g) = k}} \frac{1}{\phi(g)}.
\]
The triangle inequality and binomial theorem then imply
\[
\begin{aligned}
\abs{1-\alpha(d)-\alpha_1(d)} &\leq \sum_{k=2}^{\omega(d)} \frac{1}{k!} \sumpth{\sum_{p\mid d} \frac{1}{p-1}}^k \leq \sum_{k=2}^{\omega(d)} \frac{1}{k!} \fracp{\omega(d)}{p(d)-1}^k \leq \frac{4}{p(d)^2} \sum_{k\geq 2} \frac{K^k}{k!} \leq \frac{4e^K}{p(d)^2}.
\end{aligned}
\]
\end{proof}

Combining this with \eqref{eq:FundamentalNoDivideAsymp}, we deduce the following consequences of Proposition \ref{prop:FundamentalNoDivide}. These are the forms of Proposition \ref{prop:FundamentalDivide} that we will use most often in applications.

\begin{cor}\label{cor:NoDivide}
Let the notation and hypotheses be as in Proposition \ref{prop:FundamentalNoDivide} and let $\alpha_1$ be as in Lemma \ref{lem:alphaProp}. If $K\geq 1$ and $\omega(d) \leq K$, and if $p(d) > 2\sqrt{e^K\log_2 x}$, then
\begin{equation}\label{eq:FundamentalNoDivideSimpler}
\sum_{\substack{n\leq x\\ (\phi(n),d)=1}} 1 = \frac{x}{(\log x)^{\alpha_1(d)}}\pth{1+O_K\pth{\frac{\log p(d)}{p(d)} + \frac{\log_2 x}{p(d)^2}}},
\end{equation}
We also have
\begin{equation}\label{eq:FundamentalDivide}
\sum_{\substack{n\leq x\\ d\mid \phi(n)}} 1 = x \prod_{p\mid d} \pth{1-\frac{1}{(\log x)^{1/(p-1)}}} + O_K\pth{x\pth{\frac{\log p(d)}{p(d)} + \frac{\log_2 x}{p(d)^2}}}
\end{equation}
and
\begin{equation}\label{eq:FundamentalNoDivideUpper}
\sum_{\substack{n\leq x\\ d\nmid \phi(n)}} 1 \ll x \sum_{p\mid d} (\log x)^{-1/(p-1)}.
\end{equation}
\end{cor}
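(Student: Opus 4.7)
The three claims build on Proposition \ref{prop:FundamentalNoDivide} and Lemma \ref{lem:alphaProp}, with no essentially new input required. For \eqref{eq:FundamentalNoDivideSimpler}, the plan is to take the conclusion of Proposition \ref{prop:FundamentalNoDivide} and exchange the exponent $1-\alpha(d)$ for $\alpha_1(d)$ by writing
\[
(\log x)^{-(1-\alpha(d))} = (\log x)^{-\alpha_1(d)}\exp\bigl(\bigl[\alpha_1(d) - (1-\alpha(d))\bigr]\log_2 x\bigr).
\]
By Lemma \ref{lem:alphaProp} the bracketed exponent has absolute value at most $4e^K/p(d)^2$, and the hypothesis $p(d) > 2\sqrt{e^K\log_2 x}$ forces $4e^K(\log_2 x)/p(d)^2 \leq 1$, so Taylor expansion of the exponential yields a factor $1 + O_K(\log_2 x/p(d)^2)$. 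Combined with the error $O(\beta(d)d/\phi(d)) = O_K(\log p(d)/p(d))$ delivered by Proposition \ref{prop:FundamentalNoDivide} and \eqref{eq:BoundedBound}, this gives \eqref{eq:FundamentalNoDivideSimpler}.

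For \eqref{eq:FundamentalDivide}, since $d$ is squarefree the condition $d\mid\phi(n)$ is the conjunction over $p\mid d$ of $p\mid\phi(n)$; inclusion-exclusion on these primes gives
\[
\sum_{\substack{n\leq x\\d\mid\phi(n)}} 1 = \sum_{l\mid d}\mu(l)\sum_{\substack{n\leq x\\(\phi(n),l)=1}} 1.
\]
Every squarefree divisor $l$ of $d$ inherits the proposition's hypotheses and satisfies $\omega(l)\leq K$ and $p(l)\geq p(d)$, so the plan is to apply \eqref{eq:FundamentalNoDivideSimpler} uniformly. Collecting the main terms and using additivity of $\alpha_1$ over primes,
\[
\sum_{l\mid d}\mu(l)(\log x)^{-\alpha_1(l)} = \prod_{p\mid d}\bigl(1-(\log x)^{-1/(p-1)}\bigr),
\]
which matches the stated main term. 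The error from each $l>1$ is $O_K(x(\log p(l)/p(l) + \log_2 x/p(l)^2)) = O_K(x(\log p(d)/p(d) + \log_2 x/p(d)^2))$; summing over at most $2^K$ divisors (and absorbing the $O(1)$ from $l=1$) yields the stated bound.

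For \eqref{eq:FundamentalNoDivideUpper}, the union bound is the direct route: $d\nmid\phi(n)$ forces $p\nmid\phi(n)$ for at least one $p\mid d$, so
\[
\sum_{\substack{n\leq x\\d\nmid\phi(n)}} 1 \leq \sum_{p\mid d}\sum_{\substack{n\leq x\\(\phi(n),p)=1}} 1.
\]
Applying \eqref{eq:FundamentalNoDivideSimpler} to each inner sum with $d$ replaced by the single prime $p$ (for which $\alpha_1(p) = 1/(p-1)$, and $p\geq p(d) > 2\sqrt{e^K\log_2 x}$ makes the hypotheses trivial), the inner sum is $\ll x(\log x)^{-1/(p-1)}$, and summing over $p\mid d$ produces \eqref{eq:FundamentalNoDivideUpper}. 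The only genuinely delicate step is the exponent estimate in the first paragraph: passing from $1-\alpha(d)$ to $\alpha_1(d)$ introduces the factor $(\log x)^{[1-\alpha(d)]-\alpha_1(d)}$, which is only close to $1$ when $p(d)$ dominates $\sqrt{\log_2 x}$, and this is precisely why the hypothesis $p(d) > 2\sqrt{e^K\log_2 x}$ appears in the statement.
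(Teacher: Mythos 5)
Your argument matches the paper's proof step for step: the first part uses Lemma \ref{lem:alphaProp} to swap the exponent $1-\alpha(d)$ for $\alpha_1(d)$, expands the resulting $\exp(O_K(\log_2 x/p(d)^2))$ factor (which is where the hypothesis $p(d) > 2\sqrt{e^K\log_2 x}$ is consumed, exactly as you observe), and combines with \eqref{eq:BoundedBound}; the second part is the same M\"obius inversion over divisors of $d$; the third part is the same union bound followed by an application of \eqref{eq:FundamentalNoDivideSimpler} to single primes. The only cosmetic difference is that you carry the $l=1$ term explicitly as an $O(1)$, while the paper isolates $\sum_{n\le x}1$ before applying the asymptotic to $g>1$; both are correct.
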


\begin{proof}
To prove \eqref{eq:FundamentalNoDivideSimpler}, note that by Lemma \ref{lem:alphaProp}, we have, for some $\abs{\theta} \leq 1$,
\[
\begin{aligned}
\frac{1}{(\log x)^{1-\alpha(d)}} = \frac{1}{(\log x)^{\alpha_1(d) + 4e^K\theta p(d)^{-2}}} &= \frac{1}{(\log x)^{\alpha_1(d)}}\exp\fracp{4e^K\theta \log_2 x}{p(d)^2} \\
&= \frac{1}{(\log x)^{\alpha_1(d)}}\pth{1+O_K\fracp{\log_2 x}{p(d)^2}}.
\end{aligned}
\]
The estimate \eqref{eq:FundamentalNoDivideSimpler} now follows by inserting this into \eqref{eq:FundamentalNoDivideAsymp} and applying \eqref{eq:BoundedBound}.

For \eqref{eq:FundamentalDivide}, M\"{o}bius inversion gives
\[
\sum_{\substack{n\leq x\\ d\mid \phi(n)}} 1 = \sum_{g\mid d} \mu(g) \sum_{\substack{n\leq x\\ (g,\phi(n))=1}} 1.
\]
Applying \eqref{eq:FundamentalNoDivideSimpler}, we then have
\[
\begin{aligned}
\sum_{\substack{n\leq x\\ d\mid \phi(n)}} 1 &= \sum_{g\mid d} \mu(g) \sum_{\substack{n\leq x\\ (g,\phi(n))=1}} 1 \\
&= \sum_{\substack{n\leq x}} 1  + \sum_{\substack{g\mid d\\ g>1}} \mu(g) \frac{x}{(\log x)^{\alpha_1(g)}}\pth{1+O_K\pth{\frac{\log p(g)}{p(g)} + \frac{\log_2 x}{p(g)^2}}} \\
&=x \prod_{p\mid d} \pth{1-\frac{1}{(\log x)^{1/(p-1)}}} + O_K\sumpth{x\pth{\frac{\log p(d)}{p(d)} + \frac{\log_2 x}{p(d)^2}}}.
\end{aligned}
\]
To prove \eqref{eq:FundamentalNoDivideUpper}, note that if $d\nmid \phi(n)$, then $p\nmid \phi(n)$ for some $p\mid d$. Thus
\[
\sum_{\substack{n\leq x\\ d\nmid \phi(n)}} 1 \leq \sum_{p\mid d} \sum_{\substack{n\leq x\\ p\nmid \phi(n)}} 1.
\]
Applying \eqref{eq:FundamentalNoDivideSimpler} then gives \eqref{eq:FundamentalNoDivideUpper}.

\end{proof}

\section{Cleaning and Decomposition of $S_g(x)$}\label{sec:Cleaning}

Recall that $g$ is a multiplicative function bounded by 1. For notational simplicity, we suppress the subscript notation and write $S(x) = S_g(x)$. We then have
\[
S(x) = \sum_{d\leq x} g(d) \sum_{\substack{n\leq x\\ d\mid (n,\phi(n))}} 1 = \sum_{d\leq x} g(d) \sum_{\substack{n\leq \frac{x}{d}\\ d\mid \phi(nd)}} 1.
\]
To evaluate this sum, we need to perform some substantial cleaning and decomposition to remove inconvenient terms and isolate the portions of the variables $n$ and $d$ that govern the behavior of the sum. The net result of our manipulations, as well as the total error incurred as a result, can be seen in equation \eqref{eq:FullyCleaned} at the end of this section.

For brevity, let $y = \log_2 x$. As stated in the introduction, the condition $d\mid (n,\phi(n))$ is roughly equivalent to $d\mid n$ and $P(d) \leq y$. Consequently, we expect the behavior of the congruence sums $\cala_d(x)$ to be governed primarily by the prime factors of $d$ that are close to $y$. More precisely, it turns out that this behavior is governed by the prime factors $p$ that are close to $y$ on the logarithmic scale, i.e.
\[
1-\ep \leq \frac{\log p}{\log y} \leq 1+\ep,
\]
where the specific $\ep$ we consider depends on the desired precision (the parameter $K$) of the asymptotic expansion given in Theorem \ref{thm:Main}. Moreover, we expect the prime factors $p$ to affect the congruence sum $\cala_d(x)$ in different ways depending as $p > y$ or $p < y$. This discussion motivates us to decompose the variable $d$ in the following way. Let $U,V \in [1,\sqrt{y}]$ be two parameters to be chosen and write
\begin{equation}\label{eq:dDecomp}
d = s m \ell,
\end{equation}
where
\begin{itemize}
\item $P(s) \leq \frac{y}{V}$,
\item $p(m) > \frac{y}{V}$ and $P(m) \leq Uy$,
\item $p(\ell) > Uy$.
\end{itemize}

\subsection{Initial Truncation}
For technical reasons, it will be convenient to know that $d$ is much smaller than $x$. As such, before introducing the above decomposition of $d$, we first show that the contribution of large $d$ is negligible. Let
\[
D = \exp((\log_3 x)^2), \qquad D' = (\log_2 x)\exp(\sqrt{\log_3 x}),
\]
and let $S^{(1)}$ be the sum $S$ but restricted to $d\leq D$. We have
\[
\abs{S(x) - S^{(1)}(x)} \leq \sum_{\substack{D < d \leq x}} \sum_{\substack{n\leq \frac{x}{d}\\ P(d) \mid \phi(nd)}} 1 = \sumpth{\sum_{\substack{D < d \leq x\\ P(d) \leq D'}} + \sum_{\substack{D < d \leq x\\ P(d) > D'}}} \sum_{\substack{n\leq \frac{x}{d}\\ P(d) \mid \phi(nd)}} 1 = \Sigma_1 + \Sigma_2,
\]
say. To estimate $\Sigma_1$, we use Rankin's trick in the form of \eqref{eq:Rankin2}:
\[
\Sigma_1 = \sum_{\substack{D < d \leq x\\ P(d) \leq D'}} \sum_{\substack{n\leq \frac{x}{d}\\ P(d) \mid \phi(nd)}} 1 \leq \sum_{\substack{d > D\\ P(d) \leq D'}} \frac{x}{d} \ll x \frac{(\log D')^3}{D^{1/\log D'}} \ll \frac{x}{\sqrt{\log_2 x}}.
\]
To estimate $\Sigma_2$, we note that if $P(d) \mid \phi(nd)$, then at least one of the following holds:
\begin{enumerate}
\item $P(d)^2 \mid d$;
\item $P(d) \mid n$;
\item $P(d) \nmid n$ and $P(d) \mid \phi(n)$.
\end{enumerate}
The contribution to $\Sigma_2$ of the first case is
\[
\sum_{\substack{D < d \leq x\\ P(d) > D'\\ P(d)^2\mid d}} \sum_{n\leq \frac{x}{d}} 1 \leq \sum_{p > D'} \frac{x}{p^2} \sum_{\substack{d > D/p^2\\ P(d) \leq p}} \frac{1}{d} \ll x\sum_{p > D'} \frac{\log p}{p^2} \ll \frac{x}{D'}
\]
and the contribution of the second case satisfies the same estimate. For the third case, we use Proposition \ref{prop:FundamentalDivide} to see that this contribution is at most 
\[
\begin{aligned}
\sum_{\substack{D < d \leq x\\ P(d) > D'}} \sum_{\substack{n\leq \frac{x}{d}\\ P(d) \mid \phi(n)}} 1 &\ll \sum_{\substack{D < d \leq x\\ P(d) > D'}} \frac{x\log_2 x}{dP(d)} \ll \sum_{p > D'} \frac{x\log_2 x}{p^2} \sum_{\substack{d > D\\P(d) \leq p}} \frac{1}{d} \ll \frac{x\log_2 x}{D'}.
\end{aligned}
\]
Combining the three estimates above, we have
\[
\Sigma_2 \ll x\exp(-\sqrt{\log_3 x}),
\]
and so
\begin{equation}\label{eq:Cleaning1}
S(x) = S^{(1)}(x) + O\pth{x\exp(-\sqrt{\log_3 x})}
\end{equation}
since $\sqrt{\log_2 x} \gg \exp(\sqrt{\log_3 x})$.

\subsection{Reduction to Primes Close to $y$}
Recall that we expect the behavior of the congruence sums $\cala_d(x)$ to be governed primarily by the prime factors of $d$ that are close to $y$ in the logarithmic scale. To make this precise, we begin by decomposing $S^{(1)}$ using \eqref{eq:dDecomp}. We have
\[
S^{(1)}(x) = \sum_{\substack{s\leq D\\P(s) \leq \frac{y}{V}}} \sum_{\substack{m\leq \frac{D}{s}\\p(m) > \frac{y}{V} \\ P(m) \leq Uy}} \sum_{\substack{\ell\leq \frac{D}{sm}\\ p(\ell) > Uy}} g(sm\ell)\sum_{\substack{n\leq \frac{x}{sm\ell}\\ sm\ell\mid \phi(sm\ell n)}} 1. 
\] 
The contribution of $\ell > 1$ may be estimated in a similar manner as our truncation above: the summation conditions imply that $P(\ell) \mid (n,\phi(n))$, and so arguing as before, the contribution to $S^{(1)}$ of $\ell > 1$ is
\[
\ll \sum_{\substack{s\leq D\\P(s) \leq \frac{y}{V}}} \sum_{\substack{m\leq \frac{D}{s}\\p(m) > \frac{y}{V} \\ P(m) \leq Uy}} \sum_{\substack{1 < \ell\leq \frac{D}{sm}\\ p(\ell) > Uy}} \frac{x\log_2 x}{sm\ell P(\ell)} \ll x\log_2 x \sumpth{\sum_{\substack{d\leq D\\ P(d)\leq Uy}} \frac{1}{d}} \sum_{p > Uy} \frac{1}{p^2} \sum_{\substack{\ell \geq 1\\ p(\ell) > Uy\\ P(\ell) \leq p}} \frac{1}{\ell} \ll \frac{x}{U}.
\]
Thus
\begin{equation}\label{eq:Cleaning2}
S^{(1)}(x) = S^{(2)}(x) + O\pth{x\exp(-\sqrt{\log_3 x}) + \frac{x}{U}},
\end{equation}
where
\[
S^{(2)}(x) = \sum_{\substack{s\leq D\\P(s) \leq \frac{y}{V}}} \sum_{\substack{m\leq \frac{D}{s}\\p(m) > \frac{y}{V} \\ P(m) \leq Uy}} g(sm)\sum_{\substack{n\leq \frac{x}{sm}\\ sm\mid \phi(smn)}} 1. 
\]

Next, we show that we may ignore the condition $s\mid \phi(smn)$ with negligible error. We begin by writing
\[
\sum_{\substack{n\leq \frac{x}{sm}\\ sm\mid \phi(smn)\\ (n,m)=1}} 1 = \sum_{\substack{n\leq \frac{x}{sm}\\ m\mid \phi(smn)\\ (n,m)=1}} 1 - \sum_{\substack{n\leq \frac{x}{sm}\\ m\mid \phi(smn)\\ s\nmid\phi(smn)\\ (n,m)=1}} 1.
\]
To estimate the second sum, we apply (i) of Lemma \ref{lem:PhiProps}, ignore the conditions involving $m$, and then use \eqref{eq:FundamentalNoDivideUpper} to see that
\[
\sum_{\substack{n\leq \frac{x}{sm}\\ m\mid \phi(smn)\\ s\nmid\phi(smn)\\ (n,m)=1}} 1 \leq \sum_{\substack{n\leq \frac{x}{sm}\\ s\nmid\phi(n)}} 1 \ll \frac{x}{sm} \sum_{p\mid s} \frac{1}{(\log x)^{1/p}}. 
\]
Here we have used that the fact that $\log(x/sm) \asymp \log x$ since $sm\leq D$.  The contribution to $S^{(2)}$ of these $n$ is then
\[
\ll \sum_{\substack{s\leq D\\P(s) \leq \frac{y}{V}}} \sum_{\substack{m\leq \frac{D}{s}\\p(m) > \frac{y}{V} \\ P(m) \leq Uy}} \frac{x}{sm} \sum_{p\mid s} \frac{1}{(\log x)^{1/p}}\ll x\log y \sum_{p\leq \frac{y}{V}} \frac{1}{p(\log x)^{1/p}}.
\]
Since the function $t\mapsto t(\log x)^{1/p}$ is decreasing for $t\leq \log_2 x$, the sum over primes is
\[
\leq \pi\fracp{y}{V} \frac{V}{y(\log x)^{V/y}} \ll \frac{1}{e^V\log y}.
\]
Therefore
\begin{equation}\label{eq:Cleaning4}
S^{(2)}(x) =  S^{(3)}(x) + O\pth{x\exp(-\sqrt{\log_3 x}) + \frac{x}{U} + \frac{x}{e^V}},
\end{equation}
where
\[
S^{(3)}(x) = \sum_{\substack{s\leq D\\P(s) \leq \frac{y}{V}}} \sum_{\substack{m\leq \frac{D}{s}\\p(m) > \frac{y}{V} \\ P(m) \leq Uy}} g(sm)\sum_{\substack{n\leq \frac{x}{sm}\\ m\mid \phi(mn)\\ (n,m)=1}} 1.
\]
Here the divisibility condition should actually be $m\mid \phi(smn)$. However, since $p(m) > P(s)$, we have $(m,s\phi(s)) = 1$, and therefore the two divisibility conditions are equivalent in our ranges of $s$ and $m$.
\subsection{Restriction to Squarefree Moduli}
Next, we remove from $S^{(3)}$ those $m$ which are not squarefree as well as those $m$ for which $(m,n) > 1$. The contribution of those $m$ which are not squarefree is at most
\[
\sum_{\substack{s\leq D\\P(s) \leq \frac{y}{V}}} \sum_{\frac{y}{V} < p\leq Uy} \sum_{\substack{m\leq \frac{D}{sp^2}\\p(m) > \frac{y}{V} \\ P(m) \leq Uy}} \frac{x}{smp^2} \ll \frac{xV}{y} \ll \frac{x}{\sqrt{y}}
\]
by our restriction $V \leq \sqrt{y}$. This error may be absorbed into the error of \eqref{eq:Cleaning4}, and we now consider the sum
\[
\sum_{\substack{s\leq D\\P(s) \leq \frac{y}{V}}} \sumflat_{\substack{m\leq \frac{D}{s}\\p(m) > \frac{y}{V} \\ P(m) \leq Uy}} g(sm)\sum_{\substack{n\leq \frac{x}{sm}\\ m\mid \phi(mn)}} 1 = \sum_{\substack{s\leq D\\P(s) \leq \frac{y}{V}}} \sumflat_{\substack{m\leq \frac{D}{s}\\p(m) > \frac{y}{V} \\ P(m) \leq Uy}} g(sm)\sum_{g\mid m} \sum_{\substack{n\leq \frac{x}{sm}\\ m\mid \phi(mn)\\ (n,m)=g}} 1.
\]
The contribution of $g > 1$ is trivially bounded by
\[
\sum_{\substack{s\leq D\\P(s) \leq \frac{y}{V}}} \sumflat_{\substack{m\leq \frac{D}{s}\\p(m) > \frac{y}{V} \\ P(m) \leq Uy}} \sum_{\substack{g\mid m\\ g> 1}} \frac{x}{smg^2} \ll x\log y \sum_{\substack{g > 1\\ p(g) > \frac{y}{V} \\ P(g) \leq Uy}} \frac{x}{g^2} \ll \frac{xV\log y}{y},
\]
and again this may be absorbed into the error term in \eqref{eq:Cleaning4}. Thus
\begin{equation}\label{eq:Cleaning3}
S^{(3)}(x) =  S^{(4)}(x) + O\pth{x\exp(-\sqrt{\log_3 x}) + \frac{x}{U} + \frac{x}{e^V}},
\end{equation}
where
\[
S^{(4)}(x) = \sum_{\substack{s\leq D\\P(s) \leq \frac{y}{V}}} \sumflat_{\substack{m\leq \frac{D}{s}\\p(m) > \frac{y}{V} \\ P(m) \leq Uy}} g(sm)\sum_{\substack{n\leq \frac{x}{sm}\\ m\mid \phi(mn)\\ (n,m)=1}} 1.
\]

\subsection{Final Cleaning}
Our goal is now to simplify the divisibility condition in the inner sum above. Heuristically, one expects that $(m,\phi(m))$ is the largest divisor of $m$ supported on the primes $\leq \log_2 m$ (see Theorem of 8 of \cite{ErdosLucaPomerance}). For $m$ considered in the sums above, however, we have by the Prime Number Theorem that
\[
\log_2 m \leq \log_2 \sumpth{\prod_{\frac{y}{V} < p \leq Uy}p} \sim \log y + \log(U+V) \ll \log y < \frac{y}{V}
\]
if $y$ is sufficiently large. Thus we expect $(m,\phi(m))=1$ for most $m$. Moreover, if $(m,n\phi(m)) = 1$, then the conditions $m\mid \phi(mn)$ and $m\mid \phi(n)$ are equivalent. As such, we may replace the divisibility condition on $m$ with  $m\mid\phi(n)$ by making an error of at most the contribution from those $m$ with $(m,\phi(m)) > 1$. 

To estimate this error, we note that since $m$ is squarefree, if $(m,\phi(m)) > 1$, then $m$ is divisible by a pair of primes $p,q$ such that $q\equiv 1 \mod{p}$. The contribution of these $m$ to $S^{(4)}(x)$ is bounded by
\[
\begin{aligned}
x \sum_{\substack{s\leq D\\ P(s) \leq \frac{y}{V}}} \frac{1}{s} \sum_{\frac{y}{V} < p \leq Uy} \sum_{\substack{q \leq y\\ q\equiv 1 \mod{p}}} \sumflat_{\substack{m\leq \frac{D}{s}\\ P(m) \leq Uy\\ p(m) > \frac{y}{V}\\ pq\mid m}} \frac{1}{m} &\ll x \log y \sum_{\frac{y}{V} < p \leq Uy} \frac{1}{p} \sum_{\substack{q \leq Uy\\ q\equiv 1 \mod{p}}} \frac{1}{q} \\
&\ll x\log y\log_2 y \sum_{p > \frac{y}{V}} \frac{1}{p^2}\\
&\ll \frac{xV\log_2 y}{y}.
\end{aligned}
\]
This error may be absorbed into the error of \eqref{eq:Cleaning4}. The condition $(n,m) = 1$ may now be removed at the cost of the same error we made when introducing it, and so
\begin{equation}\label{eq:Cleaning5}
S^{(4)}(x) =  S^{(5)}(x) + O\pth{x\exp(-\sqrt{\log_3 x}) + \frac{x}{U} + \frac{x}{e^V}},
\end{equation}
where
\[
S^{(5)}(x) = \sum_{\substack{s\leq D\\P(s) \leq \frac{y}{V}}} \sumflat_{\substack{m\leq \frac{D}{s}\\p(m) > \frac{y}{V} \\ P(m) \leq Uy}} g(sm)\sum_{\substack{n\leq \frac{x}{sm}\\ m\mid \phi(n)}} 1.
\]

\subsection{Final Truncation}\label{sec:FinalTruncation}
We aim to apply Proposition \ref{prop:FundamentalNoDivide} in the form of Corollary \ref{cor:NoDivide}. To do so, the variable $m$ must have a bounded number of prime factors. Let $K\geq 1$ be a fixed positive integer. We perform one last truncation and restrict our attention to those $m$ with $\omega(m) \leq K$. We write
\[
S^{(5)}(x) = S^{(6)}(x) + E(x),
\]
where
\begin{equation}\label{eq:Cleaning6}
S^{(6)}(x) = \sum_{\substack{s\leq D\\ P(s) \leq \frac{y}{V}}} \sumflat_{\substack{m\leq \frac{D}{s}\\ P(m) \leq y\\ p(m) > \frac{y}{V}\\ \omega(m)\leq K}} g(sm) \sum_{\substack{n\leq \frac{x}{sm}\\ m\mid \phi(n)}} 1
\end{equation}
and $E(x) = S^{(5)}(x) - S^{(6)}(x)$. To estimate $E(x)$, we bound the sums over $n$ trivially and use the binomial theorem to see that
\[
\abs{E(x)} \leq \sum_{\substack{s\leq D\\ P(s) \leq \frac{y}{V}}} \sumflat_{\substack{m\leq \frac{D}{s}\\ P(m) \leq Uy\\ p(m) > \frac{y}{V}\\\omega(m)\geq K+1}} \frac{x}{sm} \leq x \sum_{\substack{s\geq 1\\ P(s) \leq \frac{y}{V}}} \frac{1}{s} \sum_{k\geq K+1} \sumflat_{\substack{m > 1\\ P(m) \leq Uy\\ p(m) > \frac{y}{V}\\ \omega(m) = k}} \frac{1}{m} \ll x\log y \sum_{k\geq K+1} \frac{1}{k!} \sumpth{\sum_{\frac{y}{V}<p \leq Uy} \frac{1}{p}}^k.
\]
Using \eqref{eq:MertensSum} and the fact that $U,V\leq \sqrt{y}$, the sum over primes is
\[
\log\fracp{\log(Uy)}{\log(y/V)} + O\pth{\exp(-C\sqrt{\log(y/V)})} \leq \calc\frac{\log U+\log V}{\log y}.
\]
for some positive absolute constant $\calc$. Therefore
\[
E(x)\ll x \log y \sum_{k\geq K+1} \frac{\calc^K}{k!} \fracp{\log U+\log V}{\log y}^k \ll_K  x \log y\fracp{\log U+\log V}{\log y}^{K+1}.
\]
Combining all of our estimates in this section, we find that
\begin{equation}\label{eq:FullyCleaned}
S(x) = S^{(6)}(x) + O_K\pth{x\exp(-\sqrt{\log_3 x}) + \frac{x}{U} + \frac{x}{e^V} + \log y\fracp{\log U+\log V}{\log y}^{K+1} },
\end{equation}
where $S^{(6)}$ is given by \eqref{eq:Cleaning6}.

\section{Evaluation of $S^{(6)}(x)$}\label{sec:FinalEval}

We now apply Proposition \ref{prop:FundamentalNoDivide} in the form of Corollary \ref{cor:NoDivide} to the inner sum over $n$ in $S^{(6)}(x)$, and we have
\[
\begin{aligned}
S^{(6)}(x) &= x\sum_{\substack{s\leq D\\ P(s) \leq \frac{y}{V}}} \sumflat_{\substack{m\leq \frac{D}{s}\\ P(m) \leq Uy\\ p(m) > \frac{y}{V}\\\omega(m)\leq K}} \frac{g(sm)}{sm} \prod_{p\mid m} \pth{1-\frac{1}{(\log(x/sm))^{1/(p-1)}}} \\
&\qquad + O_K\sumpth{D+\sum_{\substack{s\leq D\\ P(s) \leq \frac{y}{V}}} \sumflat_{\substack{1<m\leq \frac{D}{s}\\ P(m) \leq Uy\\ p(m) > \frac{y}{V}\\\omega(m)\leq K}}\frac{x}{sm}\pth{\frac{\log p(m)}{p(m)} + \frac{\log_2 x}{p(m)^2}}}.
\end{aligned}
\]
Here the error term $O(D)$ arises from the contribution of $m=1$. The error term contributes at most
\begin{equation}\label{eq:L71error}
\pth{\frac{V\log y}{y} + \frac{V^2\log_2 x}{y^2}} \sum_{\substack{m\geq 1\\ P(m) \leq Uy}} \frac{x}{m} \ll x\frac{V\log y(\log y+V)}{y}.
\end{equation}
To evaluate the main term, we would like $\log x$ in the product rather than $\log(x/sm)$. Note that
\[
1-\frac{1}{(\log(x/sm))^{1/(p-1)}} = \pth{1-\frac{1}{(\log x)^{1/(p-1)}}}\pth{1+O\fracp{\log D}{p\log x}}
\]
and also
\[
\prod_{p\mid m}\pth{1+O\fracp{\log D}{p\log x}} = 1 + O\fracp{K \log D}{p(m)\log x}.
\]
The contribution of the error term above may be absorbed into \eqref{eq:L71error}. Next, we extend the sums over $s$ and $m$ to all integers. Using Rankin's trick in the form of \eqref{eq:Rankin2}, the contribution of $sm > D$ is bounded by
\[
x\sum_{\substack{n > D\\ P(n) \leq Uy}} \frac{1}{n} \ll \frac{x(\log y)^3}{D^{\frac{1}{\log(Uy)}}} \ll \frac{x}{(\log_2 x)^{\frac{1}{4}}}.
\] 
The error term $O(D)$ may be absorbed into this term, and so
\[
\begin{aligned}
S^{(6)}(x) &= x\sum_{\substack{s\geq 1\\ P(s) \leq \frac{y}{V}}} \sumflat_{\substack{m\geq 1\\ P(m) \leq Uy\\ p(m) > \frac{y}{V}\\\omega(m)\leq K}} \frac{g(sm)}{sm} \prod_{p\mid m} \pth{1-\frac{1}{(\log x)^{1/(p-1)}}} \\
&\qquad + O_K\sumpth{\frac{x}{(\log_2 x)^{\frac{1}{4}}} + x\frac{V\log y(\log y+V)}{y}}.
\end{aligned}
\]
The condition $\omega(d) \leq K$ may now be removed with the same error as when we introduced it, and the main term of $S^{(6)}(x)$ may now be written as the product
\begin{equation}\label{eq:C6Main}
x \prod_{p\leq \frac{y}{V}} \sumpth{\sum_{j\geq 0} \frac{g(p^j)}{p^j}}  \prod_{\frac{y}{V} < p\leq Uy} \pth{1+\frac{g(p)}{p} \pth{1-\frac{1}{(\log x)^{1/(p-1)}}}}.
\end{equation}
We simplify the second product slightly by noting that
\[
(\log x)^{-1/(p-1)} = (\log x)^{-1/p} \pth{1+O\fracp{\log_2 x}{p^2}} = (\log x)^{-1/p} + O\fracp{\log_2 x}{p^2}.
\]
In the range of $p$ in the second product, we then have
\[
1+\frac{g(p)}{p} \pth{1-\frac{1}{(\log x)^{1/(p-1)}}} = \pth{1+\frac{g(p)}{p} \pth{1-\frac{1}{(\log x)^{1/p}}}}\pth{1 + O\fracp{V}{p^2}}.
\]
Then since
\[
\prod_{\frac{y}{V} < p\leq Uy}\pth{1+O\fracp{V}{p^2}} = 1 + O\fracp{V^2}{y\log y}, 
\]
the product in \eqref{eq:C6Main} is
\[
x \prod_{p\leq \frac{y}{V}} \sumpth{\sum_{j\geq 0} \frac{g(p^j)}{p^j}}  \prod_{\frac{y}{V} < p\leq Uy}  \pth{1+\frac{g(p)}{p} \pth{1-\frac{1}{(\log x)^{1/p}}}} \pth{1 + O\fracp{V^2}{y\log y}}.
\]
The contribution of the error term is
\[
\ll \frac{V^2}{y\log y}\prod_{p\leq Uy} \pth{1-\frac{1}{p}}^{-1} \ll \frac{V^2}{y},
\]
and this may be absorbed into \eqref{eq:L71error}. Thus
\[
\begin{aligned}
S^{(6)}(x) &= x \prod_{p\leq \frac{y}{V}} \sumpth{\sum_{j\geq 0} \frac{g(p^j)}{p^j}}  \prod_{\frac{y}{V} < p\leq Uy}  \pth{1+\frac{g(p)}{p} \pth{1-\frac{1}{(\log x)^{1/p}}}} \\
&\qquad+ O_K\sumpth{\frac{x}{(\log_2 x)^{\frac{1}{4}}} + x\frac{V\log y(\log y+V)}{y} + \log y\fracp{\log U+\log V}{\log y}^{K+1}}.
\end{aligned}
\]

Inserting the above calculations into \eqref{eq:FullyCleaned}, we find that
\begin{equation}\label{eq:MainTermProduct}
S(x) =x (\calp(x) + \cale(x)),
\end{equation}
where
\[
\calp(x) = \prod_{p\leq \frac{y}{V}} \sumpth{\sum_{j\geq 0} \frac{g(p^j)}{p^j}}  \prod_{\frac{y}{V} < p\leq Uy}  \pth{1+\frac{g(p)}{p} \pth{1-\frac{1}{(\log x)^{1/p}}}}
\]
and
\begin{equation}\label{eq:ColeError}
\abs{\cale(x)} \ll_K \exp(-\half\sqrt{\log_3 x}) + \frac{1}{U} + \frac{1}{e^V} + \log y\fracp{\log U+\log V}{\log y}^{K+1}  + \frac{V\log y(\log y+V)}{y}. 
\end{equation}
\section{Conclusion}\label{sec:Conclusion}
It remains to evaluate the product $\calp(x)$. To do so, we first write
\[
\calp(x) = \calm(x) \calp_1(x) \calp_2(x),
\]
where
\[
\begin{aligned}
\calm(x) &= \prod_{p\leq y} \sumpth{\sum_{j\geq 0} \frac{g(p^j)}{p^j}} \\
\calp_1(x) &= \prod_{\frac{y}{V} < p\leq y} \sumpth{\sum_{j\geq 0} \frac{g(p^j)}{p^j}}^{-1} \pth{1+\frac{g(p)}{p} \pth{1-\frac{1}{(\log x)^{1/p}}}} \\
\calp_2(x) &=  \prod_{y < p\leq Uy}  \pth{1+\frac{g(p)}{p} \pth{1-\frac{1}{(\log x)^{1/p}}}}.
\end{aligned}
\]
Note that
\[
\sumpth{\sum_{j\geq 0} \frac{g(p^j)}{p^j}}^{-1} = 1 - \frac{g(p)}{p} + O\fracp{1}{p^2},
\]
and so
\[
\begin{aligned}
\sumpth{\sum_{j\geq 0} \frac{g(p^j)}{p^j}}^{-1}\pth{1+\frac{g(p)}{p} \pth{1-\frac{1}{(\log x)^{1/p}}}} &= 1 - \frac{g(p)}{p(\log x)^{1/p}} + O\fracp{1}{p^2} \\
&= \pth{1 - \frac{g(p)}{p(\log x)^{1/p}}}\pth{1+O\fracp{1}{p^2}}.
\end{aligned}
\]
Thus
\[
\calp(x) = \calm(x) \calp_1^*(x) \calp_2(x) + O\fracp{V}{y},
\]
where
\[
\calp_1^*(x) = \prod_{\frac{y}{V} < p \leq y} \pth{1 - \frac{g(p)}{p(\log x)^{1/p}}}.
\]
We further investigate the products $\calp_1^*$ and $\calp_2$ by noting that if $p\leq y$, then
\[
1+\frac{g(p)}{p} \pth{1-\frac{1}{(\log x)^{1/p}}} = \exp\pth{\frac{g(p)}{p} \pth{1-\frac{1}{(\log x)^{1/p}}} + O\fracp{1}{p^2}},
\]
and if $p > y$, then
\[
1 - \frac{g(p)}{p(\log x)^{1/p}} = \exp\pth{- \frac{g(p)}{p(\log x)^{1/p}}}\pth{1 + O\fracp{1}{p^2}}.
\]
Thus
\[
\begin{aligned}
\calp_1^*(x)\calp_2(x) &= \exp\sumpth{- \sum_{\frac{y}{V} < p \leq y} \frac{g(p)}{p(\log x)^{1/p}}+ \sum_{y< p \leq Uy} \frac{g(p)}{p} \pth{1-\frac{1}{(\log x)^{1/p}}} } \prod_{\frac{y}{V} < p\leq Uy} \pth{1+O\fracp{1}{p^2}} \\
&=  \exp\sumpth{- \sum_{\frac{y}{V} < p \leq y} \frac{g(p)}{p(\log x)^{1/p}}+ \sum_{y< p \leq Uy} \frac{g(p)}{p} \pth{1-\frac{1}{(\log x)^{1/p}}} } \pth{1+O\fracp{V}{y\log y}}.
\end{aligned}
\]
Arguing as in the proof of Corollary \ref{cor:asymptotic}, we have
\[
\sumabs{\sum_{\frac{y}{V} < p \leq y} \frac{g(p)}{p} \pth{1-\frac{1}{(\log x)^{1/p}}} - \sum_{y < p \leq Uy} \frac{g(p)}{p(\log x)^{1/p}}} \ll \frac{1}{\log_3 x}.
\]
The contribution of the error term may thus be absorbed into $\cale(x)$ in \eqref{eq:MainTermProduct}, and so
\[
S(x) = x \sumpth{\calm(x) \exp\sumpth{- \sum_{\frac{y}{V} < p \leq y} \frac{g(p)}{p(\log x)^{1/p}}+ \sum_{y< p \leq Uy} \frac{g(p)}{p} \pth{1-\frac{1}{(\log x)^{1/p}}} } + \cale(x)},
\]
where $\cale(x)$ satisfies \eqref{eq:ColeError}. To complete the proof of Theorem \ref{thm:Main}, we note that
\begin{equation}\label{eq:smallPrimes}
\sumabs{\sum_{p\leq \frac{y}{V}} \frac{g(p)}{p(\log x)^{1/p}}} \ll \frac{1}{e^V \log y}
\end{equation}
and
\begin{equation}\label{eq:largePrimes}
\sumabs{\sum_{p > Uy} \frac{g(p)}{p} \pth{1-\frac{1}{(\log x)^{1/p}}}} \ll \frac{1}{U\log y}.
\end{equation}
As noted in the introduction, $\calm(x) \ll \log y$, and so the contribution of the sums above may be absorbed into $\cale(x)$. We have
\[
\abs{\cale(x)} \ll_K \exp(-\half\sqrt{\log_3 x}) + \frac{1}{U} + \frac{1}{e^V} + \log y\fracp{\log U+\log V}{\log y}^{K+1}  + \frac{V\log y(\log y+V)}{y},
\]
and we conclude the proof of Theorem \ref{thm:Main} by choosing 
\begin{equation}\label{eq:chooseUV}
U = (\log_3 x)^{K}, \qquad V = \log_3 x,
\end{equation}
and so
\[
\abs{\cale(x)} \ll_K x \frac{(\log_4 x)^{K+1}}{(\log_3 x)^K} \ll x \frac{1}{(\log_3 x)^{K-1/2}},
\]
say. Since $K$ was arbitrary, this gives Theorem \ref{thm:Main}.

\section{Evaluation of Prime Sums}\label{sec:PrimeSums}

We now consider the expression given in Theorem \ref{thm:Main} for a variety of arithmetic functions $g$. First, we need the following consequence of the Prime Number Theorem with classical error term. Throughout this section, we regard $K$ is a fixed positive integer governing the precision of our asymptotic formulas.

\begin{lem}\label{lem:PrimeSum}
For any integer $K\geq 1$, we have
\[
\begin{aligned}
\sum_{p\leq \log_2 x} \frac{1}{p(\log x)^{1/p}} &= \frac{1}{\log_3 x} \sum_{k=0}^K \frac{a_k}{(\log_3 x)^k} + O_K\fracp{1}{(\log_3 x)^{K+2}},\\
\sum_{p > \log_2 x} \frac{1}{p} \pth{1-\frac{1}{(\log x)^{1/p}}} &= \frac{1}{\log_3 x} \sum_{k=0}^K \frac{b_k}{(\log_3 x)^k} + O_K\fracp{1}{(\log_3 x)^{K+2}},\\
\end{aligned}
\]
where
\[
\begin{aligned}
a_k &= \int_{1}^{\infty} \frac{(\log t)^k}{te^t}\, dt, \\
b_k &= \int_{1}^{\infty} \frac{(-\log t)^k}{t}\pth{1-\frac{1}{e^{1/t}}}\, dt.
\end{aligned}
\]
\end{lem}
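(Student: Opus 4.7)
The plan is to convert each prime sum to an integral via partial summation with the classical PNT $\pi(t) = \mathrm{Li}(t) + O(t\exp(-C\sqrt{\log t}))$, then change variables to obtain an integral in a form amenable to expansion in powers of $1/\log_3 x$. Partial summation yields
\[
\sum_{p \leq \log_2 x} \frac{1}{p(\log x)^{1/p}} = \int_2^{\log_2 x} \frac{dt}{t\log t\cdot(\log x)^{1/t}} + O\bigl(\exp(-C\sqrt{\log_3 x})\bigr),
\]
the PNT error being killed by the factor $(\log x)^{-1/t}$, which is tiny unless $t$ is close to $\log_2 x$; an analogous formula holds for the second sum, with tail convergence and PNT error both controlled by the factor $1-(\log x)^{-1/t} \ll \log_2 x / t$ for $t$ large.

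Next, substitute $s = \log_2 x / t$ in each integral. The first becomes
\[
\int_1^{(\log_2 x)/2} \frac{ds}{s\, e^s\,(\log_3 x - \log s)},
\]
and the second, after the further substitution $u = 1/s$, becomes
\[
\int_1^\infty \frac{1-e^{-1/u}}{u(\log_3 x + \log u)}\, du.
\]
Expand $(\log_3 x \pm \log(\cdot))^{-1}$ as the geometric series $(\log_3 x)^{-1}\sum_{k \geq 0} (\mp\log(\cdot)/\log_3 x)^k$; after termwise integration over $[1,\infty)$, the $k$-th coefficient produces exactly the constants $a_k$ and $b_k$ of the statement.

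The main obstacle is that the geometric expansion diverges where $|\log s|$ (or $\log u$) is comparable to $\log_3 x$, namely near the upper limit of the first integral and as $u \to \infty$ in the second. Handle this by splitting the integration at thresholds such as $s_0 = \log_3 x$ and $u_0 = \exp(\sqrt{\log_3 x})$: on the truncated ranges $|\log(\cdot)|/\log_3 x$ is uniformly small, the series converges, and truncating at $k = K$ leaves a remainder of size $O_K((\log_3 x)^{-K-2})$ after integration. The excluded tails are negligible, since $\int_{s_0}^\infty ds/(s e^s) \ll e^{-s_0}$ and $\int_{u_0}^\infty (1-e^{-1/u})/(u\log_3 x)\, du \ll 1/(u_0\log_3 x)$; the same bounds allow re-extending the truncated main integrals back to $[1,\infty)$ at admissible cost, yielding the asymptotic expansions as claimed.
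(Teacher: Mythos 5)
Your proposal is correct and follows essentially the same route as the paper: the prime number theorem with classical error plus partial summation, the change of variable $s = \log_2 x / t$, a geometric-series expansion of $(1 \mp \log(\cdot)/\log_3 x)^{-1}$, and truncation to keep the expansion parameter small with negligible tail error. The only cosmetic difference is the order of operations — the paper truncates to $p > y/A$ (with $A = \log_3 x$) before applying partial summation, while you do partial summation over the full range and then truncate the resulting integral at $s_0$ (resp.\ $u_0$) — but these choices produce the same main term, the same constants $a_k, b_k$, and the same error bound.
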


\begin{proof}
As in the proof of Theorem \ref{thm:Main}, we set $y=\log_2 x$ and choose $U$ and $V$ as in \eqref{eq:chooseUV}, except with $K+2$ in place of $K$ in the choice of $U$. Then \eqref{eq:smallPrimes} and \eqref{eq:largePrimes} give
\[
\sum_{p\leq \frac{y}{V}} \frac{1}{p(\log x)^{1/p}} + \sum_{p > U\log_2 x} \frac{1}{p} \pth{1-\frac{1}{(\log x)^{1/p}}} \ll \frac{1}{(\log_3 x)^{K+2}}.
\]
To evaluate the remaining sums, we use the Prime Number Theorem in the form
\[
\pi(t) = \int_{2}^{t} \frac{dz}{\log z} + R(t),
\]
where
\[
R(t) \ll t \exp(-C\sqrt{\log t}).
\]
Integrating by parts, we have
\[
\begin{aligned}
\sum_{\frac{y}{V} < p\leq y} \frac{1}{pe^{y/p}} &= \int_{\frac{y}{V}}^{y} \frac{1}{t(\log t)e^{y/t}}\, dt + O\pth{\exp(-C\sqrt{\log y})}, \\
\sum_{y < p \leq Uy} \frac{1}{p}\pth{1-\frac{1}{pe^{y/p}}} &=  \int_{y}^{Uy} \frac{1}{t(\log t)}\pth{1-\frac{1}{e^{y/t}}}\, dt + O\pth{\exp(-C\sqrt{\log y})}, \\
\end{aligned}
\]
where the value of $C$ is now slightly smaller. Changing variables $t\to \frac{y}{t}$ in the first integral and $t\to yt$ in the second yields
\[
\begin{aligned}
\int_{\frac{y}{V}}^{y} \frac{1}{t(\log t)e^{y/t}}\, dt &= \frac{1}{\log y}  \int_{1}^{V} \frac{1}{te^t} \pth{1-\frac{\log t}{\log y}}^{-1} dt, \\
\int_{y}^{Uy} \frac{1}{t(\log t)}\pth{1-\frac{1}{e^{y/t}}}\, dt &= \frac{1}{\log y} \int_{1}^{U} \frac{1}{t} \pth{1-\frac{1}{e^{1/t}}} \pth{1+\frac{\log t}{\log y}}^{-1} dt.
\end{aligned}
\]
If $\abs{z} \leq \frac{1}{2}$, we have
\[
\frac{1}{1-z} = \sum_{k=0}^K z^k + O\fracp{1}{z^{K+1}}
\]
uniformly in $K$. Since $t\leq V\leq U \leq \sqrt{y}$ for $x$ sufficiently large, we have
\[
\begin{aligned}
\int_{1}^{V} \frac{1}{te^t} \pth{1-\frac{\log t}{\log y}}^{-1} dt &= \sum_{k=0}^K \frac{1}{(\log y)^k} \int_{1}^{V} \frac{(\log t)^k}{te^t}\, dt + O_K\fracp{1}{(\log y)^{K+1}}, \\
\int_{1}^{U} \frac{1}{t} \pth{1-\frac{1}{e^{1/t}}} \pth{1+\frac{\log t}{\log y}}^{-1} dt &= \sum_{k=0}^K \frac{(-1)^k}{(\log y)^k} \int_{1}^{U} \frac{(\log t)^k}{t}\pth{1-\frac{1}{e^{1/t}}} dt + O_K\fracp{1}{(\log y)^{K+1}}.
\end{aligned}
\]
The tails of the integrals are
\[
\int_{V}^{\infty} \frac{(\log t)^k}{te^t}\, dt \ll_K \frac{(\log V)^k}{Ve^V}, \qquad\int_{U}^{\infty} \frac{(\log t)^k}{t}\pth{1-\frac{1}{e^{1/t}}}\, dt \ll \frac{(\log U)^k}{U}.
\]
Combining the above calculations now gives the claimed asymptotic expansions.

\end{proof}

\noindent \textbf{Remark.} The coefficients $a_k$ and $b_k$ actually arise from the Laurent series of $\Gamma(s)$ around $s=0$, where $\Gamma$ is the usual gamma function. To see this, let
\[
F(s) = \int_{1}^{\infty} \pth{1-\frac{1}{e^{1/t}}}\frac{t^{-s}}{t}\, dt -  \int_{1}^{\infty}\frac{t^s}{te^t}\, dt
\]
and observe that $b_k - a_k = F^{(k)}(0)$, where here $F^{(k)}$ denotes the $k$th derivative of $F$. Changing variables $t\to t^{-1}$ in the first integral, we have
\[
F(s) = \int_{0}^{1} \pth{1-\frac{1}{e^t}} \frac{t^s}{t}\, dt -  \int_{1}^{\infty}\frac{t^s}{te^t}\, dt = \frac{1}{s} - \Gamma(s).
\]

\subsection{Asymptotics for Indicator Functions}

Let $\calb$ be a set of integers whose indicator function $f$ is multiplicative. Then, as stated in the introduction, we have
\[
f(n) = \sum_{d\mid n} g(d)
\]
for a multiplicative function $g$ with $\abs{g(n)} \leq 1$ for all $n$. More specifically, we have $g(p^j) = f(p^j) - f(p^{j-1})$ for all $j\geq 1$,  and in particular, $g(p) = f(p) - 1$. Thus
\begin{equation}\label{eq:QIndicator}
\begin{aligned}
	\calq_g(x) &= -\sum_{p\leq \log_2 x} \frac{f(p)-1}{p(\log x)^{1/p}} + \sum_{p > \log_2 x} \frac{f(p)-1}{p} \pth{1-\frac{1}{(\log x)^{1/p}}} \\
	&= \calq_f(x) + \frac{1}{\log_3 x} \sum_{k=0}^K \frac{a_k-b_k}{(\log_3 x)^k} + O_K\fracp{1}{(\log_3 x)^{K+2}}
\end{aligned}
\end{equation}
by Lemma \ref{lem:PrimeSum}. Likewise
\[
\begin{aligned}
\prod_{p\leq \log_2 x} \sumpth{\sum_{j\geq 0} \frac{g(p^j)}{p^j}} &= \prod_{p\leq \log_2 x} \pth{1-\frac{1}{p}} \sumpth{\sum_{j\geq 0} \frac{f(p^j)}{p^j}} \\
&= \frac{1}{e^\gamma \log_3 x} \prod_{p\leq \log_2 x} \sumpth{\sum_{j\geq 0} \frac{f(p^j)}{p^j}} + O\pth{\exp(-C\sqrt{\log_3 x})}
\end{aligned}
\]
by \eqref{eq:MertensSum}. Thus, for certain coefficients $c_k$, we have
\begin{equation}\label{eq:AsymptoticIndicator}
\sum_{\substack{n\leq x\\ (n,\phi(n)) \in \calb}} 1 = \calf(x)\frac{x}{e^\gamma \log_3 x} \sumpth{1+ \sum_{k=1}^K \frac{c_k}{(\log_3 x)^k}} + O_K\fracp{1}{(\log_3 x)^{K+2}},
\end{equation}
where
\[
\calf(x) = \sumpth{\prod_{p\leq \log_2 x} \sum_{j\geq 0} \frac{f(p^j)}{p^j}} \exp(\calq_f(x)).
\]
In fact, the coefficients $c_k$ are exactly the coefficients in \eqref{eq:PollackPoincare}. We now specialize $f$ to certain sets $\calb$. 

\subsubsection{$r$th Powers}
If $\calb$ is the set of $r$th powers for some $r\geq 2$, then $f(p) = 0$ for all $p$ and $\calq_f(x) = 0$. Likewise
\[
\prod_{p\leq \log_2 x} \sum_{j\geq 0} \frac{f(p^j)}{p^j} = \prod_{p\leq \log_2 x} \pth{1-\frac{1}{p^r}}^{-1} = \zeta(r) + O\fracp{1}{(\log_2 x)^{r-1}\log_3 x},
\]
and so \eqref{eq:AsymptoticIndicator} gives
\begin{equation}\label{eq:rthPowers}
	\sum_{\substack{n\leq x\\ (n,\phi(n)) = m^r}} 1 = \frac{\zeta(r)x}{e^\gamma \log_3 x}\sumpth{1+ \sum_{k=1}^K \frac{c_k}{(\log_3 x)^k} } + O_K\fracp{x}{(\log_3 x)^{K+2}}.
\end{equation}
The above argument also works if $r=0$ upon noting that $\calf(x) = 1$ in this case, and so we recover \eqref{eq:PollackPoincare} by a different proof than that originally given by Pollack in \cite{PollackErdos}.

\subsubsection{$r$-free Integers}
If $\calb$ is the set of integers free of $r$th powers of primes, then $f(p^j) = 0$ for $j \geq r$. In particular, $f(p) = 1$, and so $\calq_g(x) = 0$ by \eqref{eq:QIndicator}. Thus
\[
\begin{aligned}
\prod_{p\leq \log_2 x} \sum_{j\geq 0} \frac{f(p^j)}{p^j} = \prod_{p\leq \log_2 x} \sumpth{\sum_{j\leq r-1} \frac{1}{p^j}} &=  \prod_{p\leq \log_2 x} \pth{1-\frac{1}{p}}^{-1} \pth{1-\frac{1}{p^r}} \\
&= \frac{e^\gamma x\log_3 x}{\zeta(r)} + O\pth{\exp(-C\sqrt{\log_3 x})}
\end{aligned}
\]
by \eqref{eq:MertensProduct}. Therefore
\begin{equation}\label{eq:rfree}
\sum_{\substack{n\leq x\\ (n,\phi(n))\ \text{is $r$-free}}} 1 = \frac{x}{\zeta(r)} + O_K\fracp{x}{(\log_3 x)^{K+2}}.
\end{equation}

\subsubsection{Sums of Squares}
If $\calb$ is the set of integers representable as a sum of two squares, then
\[
f(p^j) = \begin{cases}
1 & \text{if $p = 2$ or $p\equiv 1 \mod{4}$}, \\
1 & \text{if $p \equiv 3 \mod{4}$ and $j$ is even}, \\
0 & \text{if $p \equiv 3 \mod{4}$ and $j$ is odd}.
\end{cases}
\]
Thus
\[
\prod_{p\leq \log_2 x} \sum_{j\geq 0} \frac{f(p^j)}{p^j} = 2\prod_{\substack{p\leq \log_2 x\\ p\equiv 1\mod{4}}} \pth{1-\frac{1}{p}}^{-1}  \prod_{\substack{p\leq \log_2 x\\ p\equiv 3\mod{4}}} \pth{1-\frac{1}{p^2}}^{-1}. 
\]
We rewrite the square of the sum as
\[
\begin{aligned}
\sumpth{\prod_{p\leq \log_2 x} \sum_{j\geq 0} \frac{f(p^j)}{p^j}}^2 &= 2 \prod_{p\leq \log_2 x} \pth{1-\frac{1}{p}}^{-1} \prod_{p\leq \log_2 x} \pth{1-\frac{\chi_4(p)}{p}}^{-1} \prod_{\substack{p\leq \log_2 x\\ p\equiv 3\mod{4}}} \pth{1-\frac{1}{p^2}}^{-1} \\
&= \frac{\pi e^\gamma\log_3 x}{2} \prod_{p\equiv 3 \mod{4}} \pth{1-\frac{1}{p^2}}^{-1} \pth{1+O\pth{\exp(-C\sqrt{\log_3 x})}},
\end{aligned}
\]
where $\chi_4$ is the nontrivial Dirichlet character mod 4. Thus
\[
\prod_{p\leq \log_2 x} \sumpth{\sum_{j\geq 0} \frac{g(p^j)}{p^j}} =  \frac{\sqrt{\pi}B}{\sqrt{e^\gamma\log_3 x}}\pth{1+O\pth{\exp(-C\sqrt{\log_3 x})}},
\]
where $B$ is the usual Landau-Ramanujan constant given by 
\[
B^2 = \frac{1}{2} \prod_{p\equiv 3 \mod{4}} \pth{1-\frac{1}{p^2}}^{-1}.
\]
To evaluate $\calq_g(x)$, we note that 
\[
\calq_g(x) = \sum_{\substack{p\leq \log_2 x\\p\equiv 3 \mod{4}}} \frac{1}{p(\log x)^{1/p}} - \sum_{\substack{p > \log_2 x\\ p \equiv 3\mod{4}}} \frac{1}{p} \pth{1-\frac{1}{(\log x)^{1/p}}}.
\]
The prime number theorem for primes $p\equiv 3 \mod{4}$ states that for $x$ sufficiently large,
\[
\pi(x;3,4) = \frac{1}{2} \int_{2}^{x} \frac{dt}{\log t} + O\pth{t\exp(-C\sqrt{\log t})}.
\]
Arguing as in the proof of Lemma \ref{lem:PrimeSum}, we have
\[
\begin{aligned}
\sum_{\substack{p\leq \log_2 x\\ p\equiv 3 \mod{4}}} \frac{1}{p(\log x)^{1/p}} &= \frac{1}{2\log_3 x} \sum_{k=0}^K \frac{a_k}{(\log_3 x)^k} + O_K\fracp{1}{(\log_3 x)^{K+2}},\\
\sum_{\substack{p > \log_2 x\\ p \equiv 3\mod{4}}} \frac{1}{p} \pth{1-\frac{1}{(\log x)^{1/p}}} &= \frac{1}{2\log_3 x} \sum_{k=0}^K \frac{b_k}{(\log_3 x)^k} + O_K\fracp{1}{(\log_3 x)^{K+2}}.
\end{aligned}
\]
Combining all of the estimates above, we find that there are constants $d_k$ such that
\begin{equation}\label{eq:sumofsquares}
\sum_{\substack{n\leq x\\ (n,\phi(n)) \in \calb}} 1 = \frac{\sqrt{\pi}Bx}{\sqrt{e^\gamma\log_3 x}} \sumpth{1+ \sum_{k=1}^K \frac{d_k}{(\log_3 x)^k}} + O_K\fracp{1}{(\log_3 x)^{K+\frac{3}{2}}}.
\end{equation}

\subsection{The Average Number of Divisors of $(n,\phi(n))$}

We conclude by studying the average number of divisors of $(n,\phi(n))$, which was the original motivation for this work. In this case, we have $f(n) = \tau(n)$, so $g(n) = 1$. Combining Theorem \ref{thm:Main} and Lemma \ref{lem:PrimeSum}, we have
\[
\sum_{n\leq x} \tau((n,\phi(n))) = e^\gamma x \log_3 x \exp\sumpth{\frac{1}{\log_3 x} \sum_{k=0}^K \frac{b_k-a_k}{(\log_3 x)^k}}+O_K\fracp{x}{(\log_3 x)^{K}}.
\]
In particular, there exist constants $c_k'$ such that
\begin{equation}\label{eq:Divisors}
\sum_{n\leq x} \tau((n,\phi(n))) = e^\gamma x \log_3 x \sumpth{1+ \sum_{k=1}^K \frac{c_k'}{(\log_3 x)^k}} + O_K\fracp{x}{(\log_3 x)^{K}}.
\end{equation}

\bibliography{references}

\begin{thebibliography}{1}

\bibitem{ErdosLucaPomerance}
P.~Erd\H{o}s, F.~Luca, and C.~Pomerance.
\newblock On the proportion of numbers coprime to a given integer.
\newblock In {\em Anatomy of integers}, volume~46 of {\em CRM Proc. Lecture
  Notes}, pages 47--64. Amer. Math. Soc., Providence, RI, 2008.

\bibitem{ErdosGroup}
P.~Erd\"{o}s.
\newblock Some asymptotic formulas in number theory.
\newblock {\em J. Indian Math. Soc. (N.S.)}, 12:75--78, 1948.

\bibitem{HalberstamRichert}
H.~Halberstam and H.-E. Richert.
\newblock {\em Sieve methods}.
\newblock London Mathematical Society Monographs, No. 4. Academic Press
  [Harcourt Brace Jovanovich, Publishers], London-New York, 1974.

\bibitem{Koukoulopolus}
D.~Koukoulopoulos.
\newblock {\em The distribution of prime numbers}, volume 203 of {\em Graduate
  Studies in Mathematics}.
\newblock American Mathematical Society, Providence, RI, [2019] \copyright
  2019.

\bibitem{LPR}
N.~Lebowitz-Lockard, P.~Pollack, and A.~S. Roy.
\newblock {Distribution mod p of Euler’s Totient and the Sum of Proper
  Divisors}.
\newblock {\em Michigan Mathematical Journal}, pages 1 -- 24, 2023.

\bibitem{PollackOrders}
P.~Pollack.
\newblock Numbers which are orders only of cyclic groups.
\newblock {\em Proc. Amer. Math. Soc.}, 150(2):515--524, 2022.

\bibitem{PollackErdos}
P.~Pollack.
\newblock Numbers which are orders only of cyclic groups.
\newblock {\em Proc. Amer. Math. Soc.}, 150(2):515--524, 2022.

\bibitem{Pomerance}
C.~Pomerance.
\newblock On the distribution of amicable numbers.
\newblock {\em J. Reine Angew. Math.}, 293(294):217--222, 1977.

\bibitem{Szele}
T.~Szele.
\newblock \"{U}ber die endichen {O}rdnungszahlen, zu denen nur eine {G}ruppe
  geh\"{o}rt.
\newblock {\em Comment. Math. Helv.}, 20:265--267, 1947.

\end{thebibliography}
\end{document}